\newtheorem{theorem}{Theorem}           
\newtheorem{lemma}{Lemma}
\theoremstyle{definition}
\newtheorem{definition}{Definition}
\newtheorem{example}{Example}
\newtheorem{remark}{Remark}
\begin{document}

\title{Formally self-adjoint quasi-differential operators and boundary value problems}

\author{Andrii Goriunov, Vladimir Mikhailets, Konstantin Pankrashkin}

\address{Institute of Mathematics of National Academy of Sciences of Ukraine, Kyiv, Ukraine}
\email{goriunov@imath.kiev.ua}

\address{Institute of Mathematics of National Academy of Sciences of Ukraine, Kyiv, Ukraine}
\email{mikhailets@imath.kiev.ua}

\address{Laboratory of mathematics, University Paris-Sud 11, Orsay, France}
\email{konstantin.pankrashkin@math.u-psud.fr}

\keywords{quasi-differential operator; distributional coefficients; self-adjoint extension; maximal dissipative extension; generalized resolvent}

\subjclass[2010]{34B05, 34L40, 34B38, 47N20}

\thanks{The first and second authors were partially supported by the grant no. 01-01-12 
of National Academy of Sciences of Ukraine (under the joint Ukrainian-Russian project of NAS of 
Ukraine and Russian Foundation of Basic Research)}

\begin{abstract}
We develop the machinery of boundary triplets for one-dimensional operators
generated by formally self-adjoint quasi-differential expression of arbitrary order
on a finite interval. The technique are then used to describe
all maximal dissipative, accumulative 
and self-adjoint extensions of the associated minimal operator
and its generalized resolvents in terms
of the boundary conditions.
Some specific classes are considered in greater detail. 
\end{abstract}

\maketitle

\section{Introduction}

Many problems of the modern mathematical physics and the  quantum mechanics
lead to the study of differential operators with strongly singular coefficients such as
Radon measures or even more singular distributions,
see the monographs \cite{Albeverio, AlbeKur} and the very recent papers \cite{Teschl-12-unpubl,Teschl-arXiv1105.3755E,EGNT,EGNT2}
and the references therein. In such situations one is faced with the problem of a correct definition of such operators as
the classical methods of the theory of differential operators cannot be applied anymore.
It was observed in the recent years that a large class of one-dimensional operators
can be handled
in a rather efficient way with the help of the so-called Shin--Zettl quasi--derivatives \cite{AtkEvZ-88, SavShkal-99}.
The class of such operators includes, for example,
the Sturm-Liouville operators  acting
in $L_2([a,b], \mathbb{C})$ by the rule
\begin{equation}\label{St-L expr}
l(y) = - (py')' + qy,
\end{equation}
where the coefficients $p$ and $q$ satisfy the conditions  
\[
\dfrac{1}{p},\, \dfrac{Q}{p},\, \dfrac{Q^2}{p} \in L_1([a,b], \mathbb{C}),
\]
$Q$ is the antiderivative of the distribution $q$, and $[a,b]$ is a finite intevral. 
The condition $1/p \in L_1([a,b], \mathbb{C})$ implies that the potential function $q$ may be 
a finite measure on $[a,b]$, see~\cite{GMSt-L}.

For the two-term formal differential expression 
\begin{equation}\label{m expr}
l(y) = i^m y^{(m)} + qy,  \qquad m \geq 3, 
\end{equation}
where $q = Q'$ and $Q \in L_1([a,b], \mathbb{C})$,
the regularisation with quasi-derivatives 
was constructed in \cite{GM_high}. 
Similarly one can study the case
\[ q = Q^{(k)}, \qquad k \leq \left[\frac{m}{2}\right],\]
where $Q \in L_2([a,b], \mathbb{C})$ if $m$ is even and $k = m/2$, 
and $Q \in L_1([a,b], \mathbb{C})$ otherwise, 
and all the derivatives of $Q$ are understood in the sense of distributions. 

In the present paper we consider one-dimensional operators generated by the most general 
formally self-adjoint quasi-differential expression of an arbitrary order
on the Hilbert space $L_2([a,b], \mathbb{C})$, and the main result consists
in an explicit construction of a boundary triplet for the associated
symmetric minimal  quasi-differential operator.
The machinery of boundary triplets \cite{Gorb-book-eng}
is a useful tool in the description and the analysis of various boundary value problems
arising in mathematical physics, see e.g. \cite{BMN02,BGP08,DM}, and we expect that
the constructions of the present paper will be useful, in particular, in the study
of higher order differential operators on metric graphs \cite{AGA}.

The quasi-differential operators were introduced first by Shin \cite{Shin-43-eng}
and then essentially developed by Zettl \cite{Zettl-75},  see also the monograph \cite{EverMark-book} and references therein.
The paper \cite{Zettl-75} provides the description of all self-adjoint extensions 
of the minimal symmetric quasi-differential operator of even order with real-valued coefficients.
It is based on the so-called  Glasman-Krein-Naimark theory and is rather implicit.
The approach of the present work gives an explicit description of the self-adjoint extensions
as well as of all maximal dissipative/accumulative extensions in terms
of easily checkable boundary conditions.

The paper is organised as follows. In Section 1 we recall basic definitions and known facts
concerning the Shin--Zettl quasi-differential operators. 
Section 2 presents the regularization of the formal differential expressions \eqref{St-L expr} and \eqref{m expr} 
using the quasi-derivatives, and some specific examples are considered.
In Section 3 the boundary triplets for the minimal symmetric operators are constructed.
All maximal dissipative, maximal accumulative and self-adjoint extensions of these operators are 
explicitly described in terms of boundary conditions.
Section 4 deals with the formally self-adjoint quasi-differential operators with real-valued coefficients. 
We prove that every maximal dissipative/accumulative extension of the minimal operator 
in this case is self-adjoint and describe all such extensions. 
In Section 5 we give an explicit description 
of all maximal dissipative/accumulative and self-adjoint extensions 
with separated boundary conditions for a special case. 
In Section 6 we describe all generalized resolvents of the minimal operator.
Some results of this paper for some particular classes of quasi-differential expressions 
were announced without proof in \cite{GM_even, GM_odd}.

\section{Quasi-differential expressions}

In this section we recall the definition and the basic facts concerning
the Shin--Zettl quasi-derivatives and the quasi-differential operators on a finite interval, 
see \cite{EverMark-book, Zettl-75}  for a more detailed discussion.

Let $m \in \mathbb{N}$ and a finite interval $[a, b]$ be given.
Denote by $Z_m([a,b])$ the set of the $m\times m$ complex matrix-valued functions $A$
whose entires $(a_{k,s})$ satisfy
\begin{equation}\label{Quasi cond}
\begin{array}{l}
1)\,\, a_{k,s} \equiv 0, \, s > k + 1;\\
2)\,\, a_{k,s} \in L_1 \left([a,b], \mathbb{C} \right), \,
\quad a_{k, k + 1} \neq 0 \text{~a.~e.~on~} [a,b],\\
\,\,\quad k = 1,2,\ldots,m,  \, s = 1,2,\ldots,k + 1;
\end{array}
\end{equation}
such matrices will be referred to as Shin--Zettl matrices of order $m$ on $[a,b]$.
Any Shin--Zettl matrix $A$ defines recursively the associated quasi-derivatives of orders 
$k \leq m$ of a function  $y \in \operatorname{Dom}(A)$ 
in the following way:
\begin{align*}
D^{[0]}y &:= y,\\
D^{[k]}y  &:= a^{-1}_{k,k+1}(t)\left((D^{[k - 1]}y)' - \sum\limits_{s = 1}^{k} {a_{k,s}(t)D^{[s - 1]}}y\right) ,
\quad k = 1,2,\ldots,m - 1,\\
D^{[m]}y  &:= (D^{[m - 1]}y)' - \sum\limits_{s = 1}^{m} {a_{m,s}(t)D^{[s - 1]}y},
\end{align*}
and the associated domain $\operatorname{Dom}(A)$ is defined
by
\[
\operatorname{Dom}(A) := \left\{y \left| D^{[k]}y \in AC([a,b], \mathbb{C}),\, k=\overline{0,m-1}\,\right.\right\}.
\]
The above yields $D^{[m]}y \in L_1([a,b], \mathbb{C})$.
The quasi-differential expression $l(y)$ of order $m$ associated with $A$
is defined by
\begin{equation}\label{Q-d_expr}
l(y) := i^mD^{[m]}y.
\end{equation}

Let $c \in [a,b]$ and $\alpha_k \in \mathbb{C}, \, k=\overline{0, m-1}$.
We say that a function $y$ solves the Cauchy problem 
\begin{equation}\label{cauchy pr 1}
l(y) - \lambda y = f \in L_2([a,b], \mathbb{C}), \qquad
 (D^{[k]}y)(c) = \alpha_k, \quad k=\overline{0,m-1},
\end{equation}
if $y$ is the first coordinate of the vector function $w$ solving
the Cauchy problem for the associated the first order matrix equation
\begin{equation}\label{cauchy pr 2}
w'(t)=A_\lambda(t)w(t) + \varphi(t),
\qquad
w(c) = (\alpha_0, \alpha_1, \ldots \alpha_{m - 1})
\end{equation}
where we denote 
\begin{equation}\label{A matrix}
A_\lambda(t):=A(t) - \left (
\begin{array}{cccc}
0&0&\ldots &0 \\
0&0&\ldots&0 \\
\vdots  &\vdots &\ddots &\vdots  \\
0&0&\ldots &0 \\
i^{-m}\lambda&0&\ldots&0
\end{array}\right) \in L_1([a,b], \mathbb{C}^{m\times m}),
\end{equation}
and $\varphi(t) := \big(0, 0, \ldots, 0, i^{-m}f(t)\big)^T \in L_1([a,b], \mathbb{C}^m)$.
The following statement is proved in \cite{Zettl-75}. 
\begin{lemma}\label{lm_cauchy_pr_unique}
Under the assumptions \eqref{Quasi cond}, 
the problem \eqref{cauchy pr 1} has a unique solution defined on  $[a,b]$.
\end{lemma}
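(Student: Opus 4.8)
The plan is to reduce the statement entirely to the existence and uniqueness theory for linear first-order systems with integrable coefficients. By the definition preceding the lemma, a function $y$ solves \eqref{cauchy pr 1} precisely when it is the first component of an absolutely continuous vector function $w$ solving the linear system \eqref{cauchy pr 2}; moreover the remaining components of $w$ are exactly the quasi-derivatives $D^{[k]}y$, $k=\overline{0,m-1}$, so that $w \in AC([a,b],\mathbb{C}^m)$ is equivalent to $y \in \operatorname{Dom}(A)$. Since $A_\lambda \in L_1([a,b],\mathbb{C}^{m\times m})$ and $\varphi \in L_1([a,b],\mathbb{C}^m)$ by \eqref{A matrix}, it therefore suffices to prove that \eqref{cauchy pr 2} has a unique solution $w \in AC([a,b],\mathbb{C}^m)$ on the whole interval.

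First I would rewrite \eqref{cauchy pr 2} as the equivalent integral equation
\[
w(t) = w(c) + \int_c^t \big(A_\lambda(s)w(s) + \varphi(s)\big)\,ds, \qquad t \in [a,b],
\]
and introduce the corresponding integral operator $T$ acting on the Banach space $C([a,b],\mathbb{C}^m)$ equipped with the supremum norm. An absolutely continuous $w$ solves \eqref{cauchy pr 2} almost everywhere if and only if it is a fixed point of $T$, and conversely any continuous fixed point is automatically absolutely continuous because the integrand lies in $L_1$.

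The central estimate is obtained by setting $g(t) := \big|\int_c^t \|A_\lambda(s)\|\,ds\big|$, which is finite and bounded on $[a,b]$, and proving by induction on $n$ that
\[
\big\|(T^n w_1)(t) - (T^n w_2)(t)\big\| \le \frac{g(t)^n}{n!}\,\|w_1 - w_2\|_\infty, \qquad t \in [a,b].
\]
Since $g(t)^n/n! \to 0$ uniformly, some iterate $T^N$ is a strict contraction, and the generalized Banach fixed point theorem yields a unique fixed point $w$, hence a unique solution of \eqref{cauchy pr 2}. Uniqueness can equally be deduced directly from Gronwall's inequality applied to the difference of two putative solutions. Because the system is linear, the solution does not blow up and is automatically defined on all of $[a,b]$.

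The main obstacle is precisely that the coefficients are only integrable rather than continuous or bounded, so the classical Picard--Lindel\"of theorem does not apply verbatim; one must work within the Carath\'eodory framework and replace the usual Lipschitz factor $|t-c|$ by the integral $g(t)$ of $\|A_\lambda\|$. The absolute continuity of this integral is what keeps the factorial estimate valid and forces the contraction, and it is also what guarantees that the limit of the successive approximations is absolutely continuous and satisfies the equation almost everywhere rather than everywhere.
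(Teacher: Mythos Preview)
Your argument is correct: the reduction to the first-order system \eqref{cauchy pr 2} is exactly the definition given in the paper, and the Carath\'eodory existence--uniqueness theorem for linear systems with $L_1$ coefficients (proved via the factorial Picard estimate you outline, or equivalently via Gronwall) gives the result on the full interval $[a,b]$.

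Note, however, that the paper does not supply its own proof of this lemma at all; it simply attributes the statement to Zettl \cite{Zettl-75}. Your write-up is precisely the standard argument one finds for such systems (and is essentially what underlies Zettl's result), so there is nothing to compare beyond observing that you have filled in what the paper leaves as a citation.
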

The quasi-differential expression $l(y)$ gives rise to the associated
\emph{maximal}
quasi-differential operator
\begin{gather*}
L_{\operatorname{max}}:y  \mapsto l(y),\\
\operatorname{Dom}(L_{\operatorname{max}}) = \left\{y \in \operatorname{Dom}(A)  \left|\,
 D^{[m]} y \in L_2([a,b], \mathbb{C})\right.\right\}.
\end{gather*}
in the Hilbert space $L_2([a,b], \mathbb{C})$, and the associated \emph{minimal} quasi-differential operator is defined as the restriction of
$L_{\operatorname{max}}$ onto the set 
\[
\operatorname{Dom}(L_{\operatorname{min}}) := 
	\left\{y \in  \operatorname{Dom}(L_{\operatorname{max}}) \left| D^{[k]}y(a) = D^{[k]}y(b) = 0,\,
k = \overline{0,m - 1}\right.\right\}.
\]
If the functions $a_{k,s}$ are sufficiently smooth, then all the brackets in the definition
of the quasi-derivatives can be expanded, and we arrive at the usual ordinary differential expressions,
and the associated quasi-differential operators become differential ones.

Let us recall the definition of the formally adjoint quasi-differential expression
$l^+(y)$.
The formally adjoint (also called the Lagrange adjoint) matrix $A^+ $ for $A \in Z_m([a,b])$ is defined by
\[
A^+ := - \Lambda_m^{-1}\overline{A^T}\Lambda_m,
\]
where $\overline{A^T}$ is the conjugate transposed matrix to $A$ and
\[
\Lambda_m :=
\begin{pmatrix}
  0 & 0  & \ldots & 0 & -1 \\
  0 & 0  & \ldots & 1 & 0 \\
  \vdots  & \vdots  & \vdots  & \vdots  & \vdots  \\
  0 & (-1)^{m - 1}  & \ldots & 0 & 0 \\
  (-1)^m & 0 & \ldots & 0 & 0
\end{pmatrix}.
\]
One can easily see that $\Lambda_m^{-1} = (-1)^{m - 1}\Lambda_m$.

We we can define the Shin--Zettl quasi-derivatives associated with $A^+$
which will be denoted by
\[
D^{\{0\}}y, D^{\{1\}}y, ..., D^{\{m\}}y,
\]
and they act on the domain 
\[
\operatorname{Dom}(A^+) := \left\{y \left| D^{\{k\}}y \in AC([a,b], \mathbb{C}),\, k=\overline{0,m-1}\,\right.\right\}.
\]
The formally adjoint quasi-differential expression is npw defined as
\[
l^+(y) := i^mD^{\{m\}}y
\]
and we denote the associated maximal and minimal operators by
$L^+_{\operatorname{max}}$ and $L^+_{\operatorname{min}}$ respectively
The following theorem is proved in \cite{Zettl-75}.
\begin{theorem}\label{thm_L_adjoint}
The operators $L_{\operatorname{min}}$, $L^+_{\operatorname{min}}$, $L_{\operatorname{max}}$, $L^+_{\operatorname{max}}$ are closed and densely defined in $L_2\left([a,b], \mathbb{C}\right)$, and satisfy
\[
L_{\operatorname{min}}^* = L^+_{\operatorname{max}},\quad L_{\operatorname{max}}^* = L^+_{\operatorname{min}}.
\]
If \, $l(y) = l^+(y)$, then the operator $L_{\operatorname{min}} = L^+_{\operatorname{min}}$ is symmetric 
with the deficiency indices $\left({m,m} \right)$, and
\[
L_{\operatorname{min}}^* = L_{\operatorname{max}},\quad L_{\operatorname{max}}^* = L_{\operatorname{min}}.
\]
\end{theorem}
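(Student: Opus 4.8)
The plan is to base the whole argument on a Green (Lagrange) formula relating $l$ and $l^+$ through the boundary values of the quasi-derivatives, and then read off all the assertions from it. First I would pass to the first-order systems: writing $Y := (D^{[0]}y, \dots, D^{[m-1]}y)^T$ and $Z := (D^{\{0\}}z, \dots, D^{\{m-1\}}z)^T$, the definitions of the quasi-derivatives give $Y' = A Y + \varphi$ and the analogous relation for $Z$ with $A^+$ in place of $A$. Differentiating the bilinear form $t \mapsto \langle \Lambda_m Y(t), Z(t)\rangle$ and inserting these system identities, together with the defining relation $A^+ = -\Lambda_m^{-1}\overline{A^T}\Lambda_m$ and $\Lambda_m^{-1} = (-1)^{m-1}\Lambda_m$, I would obtain
\[
\int_a^b \big( l(y)\,\overline{z} - y\,\overline{l^+(z)}\big)\,dt = [y,z](b) - [y,z](a), \qquad [y,z](t) := \langle \Lambda_m Y(t), Z(t)\rangle,
\]
valid for $y \in \operatorname{Dom}(A)$ with $D^{[m]}y \in L_2([a,b],\mathbb{C})$ and $z \in \operatorname{Dom}(A^+)$ with $D^{\{m\}}z \in L_2([a,b],\mathbb{C})$. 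This step is purely algebraic and rests on the anti-diagonal structure of $\Lambda_m$.

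Next I would settle density and the easy inclusion. Every $y \in C_0^\infty(a,b)$ lies in $\operatorname{Dom}(L_{\operatorname{min}})$, since its quasi-derivatives are absolutely continuous and vanish near the endpoints, and $C_0^\infty(a,b)$ is dense in $L_2([a,b],\mathbb{C})$; hence all four operators are densely defined. For $y \in \operatorname{Dom}(L_{\operatorname{min}})$ all boundary terms in the Lagrange formula vanish, so $(L_{\operatorname{min}}y, z) = (y, L^+_{\operatorname{max}}z)$ for every $z \in \operatorname{Dom}(L^+_{\operatorname{max}})$, which proves $L^+_{\operatorname{max}} \subseteq L_{\operatorname{min}}^*$.

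The hard part is the reverse inclusion $L_{\operatorname{min}}^* \subseteq L^+_{\operatorname{max}}$, the regularity step. Let $N := \ker L^+_{\operatorname{max}}$, the space of $L_2$-solutions of $l^+(v)=0$; by Lemma~\ref{lm_cauchy_pr_unique} it is $m$-dimensional, hence closed. The crucial lemma is $\operatorname{Ran}(L_{\operatorname{min}}) = N^\perp$. The inclusion $\subseteq$ is immediate from the easy inclusion and the vanishing boundary data. For $\supseteq$, given $f \perp N$ I would solve $l(y_0) = f$ with zero initial data at $a$ (Lemma~\ref{lm_cauchy_pr_unique}), so $y_0 \in \operatorname{Dom}(L_{\operatorname{max}})$ with all $D^{[k]}y_0(a) = 0$, and then correct $y_0$ by a homogeneous solution $h$, $l(h)=0$, chosen so that $D^{[k]}(y_0+h)(b) = 0$ for all $k$; solvability of this $m\times m$ linear system for the coefficients of $h$ is, via the Lagrange identity, equivalent to $f \perp N$. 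Granting the lemma, $\ker L_{\operatorname{min}}^* = \operatorname{Ran}(L_{\operatorname{min}})^\perp = N$, so any $z \in \operatorname{Dom}(L_{\operatorname{min}}^*)$ splits as $z = \tilde z + u$, where $\tilde z$ solves $l^+(\tilde z) = L_{\operatorname{min}}^* z$ by Lemma~\ref{lm_cauchy_pr_unique} (so $\tilde z \in \operatorname{Dom}(L^+_{\operatorname{max}})$) and $u \in N \subseteq \operatorname{Dom}(L^+_{\operatorname{max}})$; hence $z \in \operatorname{Dom}(L^+_{\operatorname{max}})$ and $L^+_{\operatorname{max}}z = L_{\operatorname{min}}^* z$. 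This yields $L_{\operatorname{min}}^* = L^+_{\operatorname{max}}$, and applying the same argument to $l^+$ (whose formal adjoint is again $l$) gives $L_{\operatorname{max}}^* = L^+_{\operatorname{min}}$; all four operators are then closed, being adjoints. I expect this range characterization to be the main obstacle, as it is where the abstract adjoint domain is forced to have genuine quasi-derivative regularity.

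Finally, in the formally self-adjoint case the hypothesis $l = l^+$ identifies $L_{\operatorname{min}} = L^+_{\operatorname{min}}$ and $L_{\operatorname{max}} = L^+_{\operatorname{max}}$, so the relations above read $L_{\operatorname{min}}^* = L_{\operatorname{max}}$ and $L_{\operatorname{max}}^* = L_{\operatorname{min}}$; symmetry of $L_{\operatorname{min}}$ is then immediate from $L_{\operatorname{min}} \subseteq L_{\operatorname{min}}^* = L_{\operatorname{max}}$. For the deficiency indices I would compute $n_\pm = \dim \ker(L_{\operatorname{max}} \mp i)$ directly: by Lemma~\ref{lm_cauchy_pr_unique} the equation $l(y) = \pm i\,y$ has an $m$-dimensional solution space, and since $[a,b]$ is finite each such solution lies in $L_2([a,b],\mathbb{C}) \cap \operatorname{Dom}(A)$ with $D^{[m]}y \in L_2([a,b],\mathbb{C})$, hence in $\operatorname{Dom}(L_{\operatorname{max}})$. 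Thus $\dim\ker(L_{\operatorname{max}}\mp i) = m$ for both signs, giving deficiency indices $(m,m)$.
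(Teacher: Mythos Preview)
The paper does not actually prove this theorem; it is quoted verbatim as a result of Zettl \cite{Zettl-75}, so there is no in-paper proof to compare against. Your overall strategy---derive the Lagrange identity from the first-order system, get the easy inclusion $L^+_{\max}\subseteq L_{\min}^*$, and then force regularity via the range characterization $\operatorname{Ran}(L_{\min})=(\ker L^+_{\max})^\perp$---is the standard route and is essentially what one finds in Zettl's paper and in Everitt--Markus \cite{EverMark-book}. The deficiency-index computation at the end is also correct.

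There is, however, a genuine gap in your density step. You assert that every $y\in C_0^\infty(a,b)$ lies in $\operatorname{Dom}(L_{\min})$ because ``its quasi-derivatives are absolutely continuous.'' This is false under the Shin--Zettl hypotheses: condition~\eqref{Quasi cond} only requires $a_{k,k+1}\in L_1$ and $a_{k,k+1}\neq 0$ a.e., with no control on $a_{k,k+1}^{-1}$. For a concrete counterexample take $m=2$, $a_{1,1}=a_{2,1}=a_{2,2}=0$, and $a_{1,2}=1+\chi_{[c,b]}$ for some $c\in(a,b)$. Then $D^{[1]}y=(1+\chi_{[c,b]})^{-1}y'$ has a jump at $t=c$ whenever $y'(c)\neq 0$, so a generic smooth compactly supported $y$ is not even in $\operatorname{Dom}(L_{\max})$. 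Density of $\operatorname{Dom}(L_{\min})$ is true, but its proof in this generality does not go through $C_0^\infty$; in the cited references it is obtained as part of the adjoint computation itself (one first identifies $L_{\max}^*$ and reads off that the preminimal operator is densely defined from the fact that its adjoint is single-valued).

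There is also a smaller wrinkle in your range argument. You propose to solve $l(y_0)=f$ with zero Cauchy data at $a$ and then ``correct $y_0$ by a homogeneous solution $h$'' to annihilate the data at $b$; but any nontrivial $h$ would destroy the vanishing at $a$. In fact no correction is needed: applying your Lagrange identity to $y_0$ and an arbitrary $v\in N=\ker L^+_{\max}$ gives
\[
0=(f,v)=[y_0,v](b)-[y_0,v](a)=[y_0,v](b)=\langle \Lambda_m Y_0(b),\,V(b)\rangle,
\]
and since $v\mapsto V(b)$ is a bijection $N\to\mathbb{C}^m$ (Lemma~\ref{lm_cauchy_pr_unique}) and $\Lambda_m$ is invertible, one concludes $Y_0(b)=0$ directly. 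With this fix and a correct argument for density, the remainder of your proof is sound.
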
 
We also will require the following two lemmas whose proof can be found e.g. in \cite{EverMark-book}:
\begin{lemma}\label{lm_Lagrange}
For any $y, z \in \operatorname{Dom}(L_{\operatorname{max}})$ there holds
\[
\int\limits_a^b \left(D^{[m]}y\cdot\overline z  -
y\cdot\overline{D^{[m]}z} \right)dt =
\sum\limits_{k = 1}^{m}
(-1)^{k - 1}{D^{[m - k]}y\cdot\overline {D^{[k -1]}z}}\left|_{t = a}^{t = b}
\right.
\]
\end{lemma}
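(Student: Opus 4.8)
The plan is to recognize the left-hand integrand as an almost-everywhere $t$-derivative of the boundary bilinear form on the right, and then to integrate by the fundamental theorem of calculus. To keep the computation organized I would pass to the associated first-order systems. Writing $Y := (D^{[0]}y, D^{[1]}y, \ldots, D^{[m-1]}y)^T$ and $Z := (D^{[0]}z, \ldots, D^{[m-1]}z)^T$, the recursive definition of the quasi-derivatives shows, exactly as in \eqref{cauchy pr 2} with $\lambda = 0$, that $Y$ and $Z$ are absolutely continuous on $[a,b]$ and satisfy
\[
Y' = A\,Y + \Phi_y, \qquad Z' = A\,Z + \Phi_z,
\]
with $\Phi_y := (0, \ldots, 0, D^{[m]}y)^T$ and $\Phi_z := (0,\ldots,0, D^{[m]}z)^T$. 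Reading off the anti-diagonal entries $(\Lambda_m)_{k,m+1-k} = (-1)^k$ one checks directly that the right-hand boundary form is, up to sign, a pairing through $\Lambda_m$:
\[
\sum_{k=1}^m (-1)^{k-1}\, D^{[m-k]}y\cdot\overline{D^{[k-1]}z} = -\langle \Lambda_m Y, Z\rangle,
\]
where $\langle\cdot,\cdot\rangle$ is the Hermitian inner product of $\mathbb{C}^m$.

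First I would differentiate $t\mapsto \langle \Lambda_m Y, Z\rangle$ and substitute the two systems, splitting the outcome into a \emph{bulk} part containing $A$ and an inhomogeneous part containing $\Phi_y,\Phi_z$. Using $\Lambda_m^T = (-1)^{m-1}\Lambda_m$, the bulk part collapses to an expression of the form $\langle(\Lambda_m A + \overline{A^T}\Lambda_m)\,Y, Z\rangle$; the decisive point is that this vanishes identically by formal self-adjointness, since $A = A^+ = -\Lambda_m^{-1}\overline{A^T}\Lambda_m$ is precisely the relation $\Lambda_m A + \overline{A^T}\Lambda_m = 0$, which annihilates the bulk term for all $Y, Z$. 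The inhomogeneous part involves only the last components of $\Phi_y, \Phi_z$ paired against the first and last rows of $\Lambda_m$, whose single nonzero entries are $\pm 1$; collecting these contributions reproduces, with the signs fixed by those corner entries, the integrand $D^{[m]}y\cdot\overline z - y\cdot\overline{D^{[m]}z}$.

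Hence $\tfrac{d}{dt}\langle\Lambda_m Y, Z\rangle$ equals the left-hand integrand up to the overall sign, almost everywhere on $[a,b]$. Since $D^{[k]}y, D^{[k]}z \in AC([a,b],\mathbb{C})$ for $k \le m-1$ and $D^{[m]}y, D^{[m]}z \in L_2([a,b],\mathbb{C}) \subset L_1([a,b],\mathbb{C})$, the scalar function $t\mapsto\langle\Lambda_m Y, Z\rangle$ is absolutely continuous, so I may integrate and apply the fundamental theorem of calculus; together with the identification of the boundary form in the first paragraph this yields the claimed equality.

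The main obstacle is the algebraic cancellation in the second paragraph: one must track the alternating signs $(-1)^{k-1}$, the transpose identity $\Lambda_m^{-1} = (-1)^{m-1}\Lambda_m$, and verify that the self-adjointness relation kills the entire bulk term rather than merely part of it. A purely scalar alternative avoids $\Lambda_m$: expand $\tfrac{d}{dt}\sum_{k}(-1)^{k-1}D^{[m-k]}y\cdot\overline{D^{[k-1]}z}$ by the Leibniz rule, insert the recurrences for $(D^{[j]}y)'$ and $(D^{[j]}z)'$, and check that all interior terms telescope in pairs, leaving only the top- and bottom-order contributions. This is elementary, but the double-sum bookkeeping is exactly where the care is required.
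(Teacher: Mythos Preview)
The paper does not prove this lemma; it simply cites the monograph \cite{EverMark-book}. Your argument via the first-order system and the anti-diagonal matrix $\Lambda_m$ is correct and is in fact the standard route to Green's formula for Shin--Zettl expressions: differentiating $\langle\Lambda_m Y,Z\rangle$, using $A=A^+\Longleftrightarrow\Lambda_m A+\overline{A^T}\Lambda_m=0$ to annihilate the bulk, and reading the inhomogeneous contributions from the corner entries of $\Lambda_m$ is exactly how the computation is organized in the literature.

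One remark on scope: you invoke the hypothesis $A=A^+$, which the paper only introduces explicitly in the section \emph{after} the lemma is stated. This is nonetheless the right move. As written, the identity has $D^{[m]}z$ and $D^{[k-1]}z$ (quasi-derivatives with respect to $A$) rather than $D^{\{m\}}z$ and $D^{\{k-1\}}z$ (with respect to $A^+$), so the formula only makes sense in the formally self-adjoint case---and that is precisely the setting in which the paper applies the lemma in the proof of Lemma~\ref{PGZth}. Your computation also exposes a minor sign slip in the paper's statement: for odd $m$ the integrand should read $D^{[m]}y\cdot\overline z + y\cdot\overline{D^{[m]}z}$ (equivalently, the second term carries a factor $(-1)^{m-1}$), and indeed the paper silently uses the corrected form when it invokes the lemma for $m=2n+1$.
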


\begin{lemma}\label{lm_surj}
For any $(\alpha _0 ,\alpha _1, ..., \alpha _{m - 1}),(\beta _0,\beta _1, ..., \beta _{m - 1})\in\mathbb{C}^m$
there exists a function ${y \in \operatorname{Dom}(L_{\operatorname{max}})}$ such that
\[
D^{[k]}y(a) = \alpha _k , \quad D^{[k]}y(b) = \beta _k, \quad k = 0,1, ..., m - 1.
\]
\end{lemma}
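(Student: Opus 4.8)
The plan is to realise the prescribed boundary values by solving a controllability problem for the first-order system underlying the quasi-derivatives. The starting observation is that $y\in\operatorname{Dom}(L_{\operatorname{max}})$ precisely when the vector $w:=(D^{[0]}y,\dots,D^{[m-1]}y)^{T}$ is absolutely continuous and satisfies
\[
w'=A\,w+g\,e_m,\qquad g:=D^{[m]}y\in L_2([a,b],\mathbb{C}),
\]
where $e_m:=(0,\dots,0,1)^{T}$; this is merely a rewriting of the recursion defining the $D^{[k]}y$, i.e. it is \eqref{cauchy pr 2} with $\lambda=0$ and $f=l(y)$. Conversely, by Lemma~\ref{lm_cauchy_pr_unique}, for every $g\in L_2([a,b],\mathbb{C})$ and every $\alpha=(\alpha_0,\dots,\alpha_{m-1})\in\mathbb{C}^m$ this linear system has a unique absolutely continuous solution $w$ with $w(a)=\alpha$, and its first component $y=w_1$ lies in $\operatorname{Dom}(L_{\operatorname{max}})$ with $D^{[k]}y=w_{k+1}$. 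Hence it suffices to choose $g$ so that, starting from $w(a)=\alpha$, the solution reaches $w(b)=\beta:=(\beta_0,\dots,\beta_{m-1})$.

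Next I would introduce the fundamental matrix $\Phi$ of the homogeneous system, i.e. $\Phi'=A\Phi$ on $[a,b]$ with $\Phi(a)=I$, which is absolutely continuous and everywhere invertible. Variation of parameters gives
\[
w(b)=\Phi(b)\alpha+\Phi(b)\int_a^b \Phi(s)^{-1}e_m\,g(s)\,ds .
\]
Writing $v(s):=\Phi(s)^{-1}e_m\in\mathbb{C}^m$, the requirement $w(b)=\beta$ becomes the moment problem
\[
\int_a^b v(s)\,g(s)\,ds=\Phi(b)^{-1}\beta-\alpha=:\gamma .
\]
This is solvable for some $g\in L_2([a,b],\mathbb{C})$ as soon as the scalar components $v_1,\dots,v_m$ of $v$ are linearly independent in $L_2([a,b],\mathbb{C})$: in that case the Gram matrix $G_{jk}=\int_a^b v_j\overline{v_k}\,ds$ is invertible, and $g=\sum_k x_k\overline{v_k}$ with $Gx=\gamma$ does the job.

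The main obstacle, and the heart of the argument, is therefore the linear independence of $v_1,\dots,v_m$ — equivalently, the controllability of the pair $(A,e_m)$ on $[a,b]$. I would argue by contradiction: suppose $c^{T}v\equiv0$ for some constant $c\in\mathbb{C}^m$. The row function $r(s):=c^{T}\Phi(s)^{-1}$ satisfies $r'=-rA$ and $r(a)=c^{T}$, and by assumption its last component vanishes identically, $r_m\equiv0$. Here the Shin--Zettl structure enters decisively: since $a_{k,s}\equiv0$ for $s>k+1$, the component equations take the form $r_j'=-\sum_{i\ge j-1}r_i\,a_{i,j}$, and the superdiagonal coefficients $a_{j-1,j}$ are nonzero almost everywhere by \eqref{Quasi cond}. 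Thus $0=r_m'=-r_{m-1}a_{m-1,m}$ forces $r_{m-1}\equiv0$, and iterating this descending induction yields $r_{m-1}\equiv r_{m-2}\equiv\cdots\equiv r_1\equiv0$, so $r\equiv0$ and $c=r(a)^{T}=0$. This establishes the independence, hence the solvability of the moment problem, and the resulting $y$ has the required boundary values $D^{[k]}y(a)=\alpha_k$ and $D^{[k]}y(b)=\beta_k$. I expect the reduction to the system and the variation-of-parameters step to be routine, with essentially all the content concentrated in the nonvanishing-superdiagonal induction just described.
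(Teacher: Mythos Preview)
Your argument is correct. The reduction to the first-order system, the variation-of-parameters formula, and the descending induction exploiting the nonvanishing superdiagonal $a_{k,k+1}\neq 0$ a.e.\ all go through as you describe; the only cosmetic point is that in the step ``$0=r_m'=-r_{m-1}a_{m-1,m}$'' you have already silently used $r_m\equiv 0$ to drop the $-r_m a_{m,m}$ term, and similarly at each inductive step the terms with indices $\ge j$ vanish by the induction hypothesis.

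As for comparison with the paper: the paper does not give its own proof of this lemma at all --- it is stated with the remark that a proof ``can be found e.g.\ in \cite{EverMark-book}''. Your controllability/moment-problem argument is one of the standard routes to this surjectivity statement and is entirely in the spirit of the classical proofs; another common variant (closer to what one finds in Naimark or Everitt--Markus) constructs $y$ more directly by patching two local solutions of Cauchy problems near $a$ and near $b$ with the prescribed initial data and a smooth cutoff. Both approaches ultimately rest on Lemma~\ref{lm_cauchy_pr_unique} and the Shin--Zettl condition $a_{k,k+1}\neq 0$ a.e.; your version has the virtue of making the role of that condition completely explicit.
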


\section{Regularizations by quasi-derivatives} 

Let us consider some classes of formal differential expressions with singular
coefficients admitting a regularisation with the help of the Shin--Zettl quasi-derivatives.

Consider first the formal Sturm--Liouville expression
\[
l(y) = -(p(t)y')'(t) + q(t)y(t), \quad t \in [a, b].
\]
The classical definition of the quasi-derivatives 
\[
D^{[0]}y := y, \qquad D^{[1]}y = py', \qquad D^{[2]}y = (D^{[1]}y)' - qD^{[0]}y
\]
allows one to interpret the above expression $l$ as a regular quasi-differential one
if the function $p$ is finite almost everywhere and, in addition,
\begin{equation}\label{St-L_cond_class}
\dfrac{1}{p}, \, q \in L_1( [a,b], \mathbb{C}).
\end{equation}
Some physically interesting coefficients $q$ (i.e. having
non-integrable singularities or being a measure) are not covered by the preceding conditions,
and this can be corrected using another set of quasi-derivatives as proposed in \cite{GM St-L, SavShkal-99}.
Set
\begin{equation}\label{St-L reg}
\begin{split}
&D^{[0]} y = y, \qquad D^{[1]} y = py' - Qy, \\
&D^{[2]} y = (D^{[1]} y)' + {\frac{Q}{p}}D^{[1]} y + {\frac{Q^2}{p}}y,
\end{split}
\end{equation}
where function $Q$ is chosen so that $Q' = q$ and the derivative is understood in the sense of distributions. 
Then the expression 
\[
l[y] = - D^{[2]} y
\]
is a Shin--Zettl quasi-differential one if the following conditions are satisfied:
\begin{equation}\label{St-L_cond_GM}
\dfrac{1}{p},\, \dfrac{Q}{p},\, \dfrac{Q^2}{p} \in L_1( [a,b], \mathbb{C}).
\end{equation}
In this case the expression $l$ generates the associated
quasi-differential operators $L_{\operatorname{min}}$ and $L_{\operatorname{max}}$.
One can easily see that if $p$ and $q$ satisfy conditions \eqref{St-L_cond_class},
then these operators  coincide with the classic Sturm-Liouville operators, but
the conditions \eqref{St-L_cond_GM} are considerably weaker than 
\eqref{St-L_cond_class}, and the class of admissible coefficients is much larger
if one uses the quasi-differential machinery. This can be illustrated with an example. 
\begin{example}\label{ex_t^a}
Consider the differential expression  \eqref{St-L expr} with  $p(t) = t^{\alpha}$ and $q(t) = ct^\beta$, and assume $c\neq 0$. 
The conditions \eqref{St-L_cond_class} are reduced to the set of the inequalities $\alpha < 1$ and $\beta > -1$, 
while the conditions \eqref{St-L_cond_GM} hold for
\[ \alpha < 1 \text{ and } \beta > \max\Big\{\alpha - 2, \dfrac{\alpha - 3}{2}\Big\}. \]
So we see that the use of the quasi-derivatives allows one to consider 
the Sturm--Liouville expressions with any power singularity of the potential $q$
if it is compensated by an appropriate function $p$. 
\qed
\end{example}

\begin{remark}
The formulas  \eqref{St-L reg} for the quasi-derivatives contain a certain arbitrariness
due to the non-uniqueness of the function $Q$ which is only determined up to a constant.
However, one can show that if $\widetilde{Q} := Q + c$, 
for some constant $c \in \mathbb{C}$, then
$L_{\operatorname{max}}(Q) =  L_{\operatorname{max}}(\widetilde{Q})$ and
$L_{\operatorname{min}}(Q) =  L_{\operatorname{min}}(\widetilde{Q})$,
i.e. the maximal and minimal operators do not depend 
on the choice of $c$. \qed
\end{remark}

One can easily see that the expression 
\[ l^+(y) = -(\overline{p}y')' + \overline{q}y \]
defines the quasi-differential expression which is formally adjoint to one 
generated by \eqref{St-L expr}. 
It brings up the associated maximal and minimal operators $L^+_{\operatorname{max}}$ and $L^+_{\operatorname{min}}$.
Theorem \ref{thm_L_adjoint} shows that if $p$ and $q$ in \eqref{St-L expr} are real-valued,
then the operator 
$L_{\operatorname{min}} = L^+_{\operatorname{min}}$ is symmetric. 

It is well-known that for the particular case $p \equiv 1$ and
$q \in  L_2([a,b], \mathbb{C})$ one has
\[
\operatorname{Dom}(L_{\operatorname{max}}) = W^2_2([a,b], \mathbb{C}) \subset C^1([a,b], \mathbb{C}).
\]
The following example shows that in some cases all functions
in $\operatorname{Dom}(L_{\operatorname{max}})\setminus\{0\}$ 
are non-smooth. 

\begin{example}\label{ex_Non-smooth Dom}
Consider the differential expression \eqref{St-L expr} with 
\[
p(t) \equiv 1, \qquad q(t) = \sum\limits_{\mu \in \mathbb{Q}\cap(a,b)}{\alpha_\mu \delta(t - \mu)},
\]
where $\mathbb{Q}$ is the set of real rational numbers and 
\[
\alpha_\mu \neq 0 \text{ for all } \mu\in\mathbb{Q}\cap(a,b), \text{ and } \sum\limits_{\mu\in\mathbb{Q}\cap(a,b)}|\alpha_\mu| < \infty.
\]
Then one can take
\[
Q(t) = \sum\limits_{\mu\in\mathbb{Q}\cap(a,b)}{\alpha_\mu H(t - \mu)},
\]
with $H(t)$ being Heaviside function, and $Q$ is a function of a bounded variation 
having discontinuities at every rational point of $(a,b)$. Therefore,
for every subinterval $[\alpha, \beta] \subset (a,b)$ and any
$y \in \operatorname{Dom}(L_{\operatorname{max}})\,\cap\, C^1([\alpha,\beta], \mathbb{C})$
we have
\[
y'(\mu_+) - y'({\mu_-}) = \alpha_\mu y(\mu),\,\, \mu \in\mathbb{Q}\cap[\alpha,\beta]. 
\]
Then $\alpha_\mu y(\mu) = 0$ for all $\mu \in\mathbb{Q}\cap[\alpha,\beta]$, which gives
$y(\mu) = 0$, and the density of  $\{\mu\}\cap[\alpha,\beta]$ in $[\alpha,\beta]$ implies
$y(t) = 0$ for all $t \in [\alpha,\beta]$.
\qed
\end{example}

Now consider the expression
\[
l(y) = i^m y^{(m)}(t) + q(t)y(t),  \qquad m \geq 3,
\]
assuming that 
\begin{equation}\label{m_cond_GM}
\begin{split}
 &q = Q^{(k)}, \quad 1 \leq k \leq \left[\frac{m}{2}\right], \\
 &Q \in 
  \begin{cases}
   L_2([a,b], \mathbb{C}), \, m = 2n, \, k = n; \\[6pt]
   L_1([a,b], \mathbb{C}) \text{ otherwise, }
  \end{cases}
 \end{split}
\end{equation}
where the derivatives of $Q$ are understood in the sense of distributions. 
Introduce the quasi-derivatives as follows:
\begin{equation}\label{m reg}
\begin{split}
&D^{[r]}y = y^{(r)}, \qquad 0 \leq r \leq m - k - 1;\\
&D^{[m - k + s]}y = (D^{[m - k + s - 1]}y)' + i^{-m}(-1)^{s} {k\choose s} \,Q D^{[s]}y, \qquad 0 \leq s \leq k - 1;\\
&D^{[m]}y = \begin{cases} 
            (D^{[m - 1]}y)' + i^{-m}(-1)^{k} { k \choose k} Q D^{[k]}y, & 1 \leq k < m/2, \\
            (D^{[m - 1]}y)' + Q D^{[\frac{m}{2}]}y + (-1)^{\frac{m}{2} + 1} Q^2 y, & m = 2n = 2k;
           \end{cases}
\end{split}
\end{equation}
where $k \choose j$ are the binomial coefficients. 
It is easy to verify that for sufficiently smooth functions $Q$ the equality $l(y) = i^m D^{[m]} y$ holds.
Also one can easily see that, under assumptions \eqref{m_cond_GM},
all the coefficients of the quasi-derivatives \eqref{m reg} are integrable functions. 
The Shin--Zettl matrix corresponding to \eqref{m reg} has the form 
\begin{equation}\label{m matr}
A(t):=\left ( \begin{array}{cccccccc}
0 &1 &0 &\ldots &0 &\ldots &0 &0 \\
0 &0 &1 &\ldots &0 &\ldots &0 &0 \\
\vdots &\vdots &\vdots &\vdots &\vdots &\vdots &\vdots &\vdots\\
- i^{-m} {k\choose 0} Q &0 &0 &\ldots &0 &\ldots &0 &0 \\
0& i^{-m} {k \choose 1} Q &0 &\ldots &0 &\ldots &0 &0 \\
\vdots &\vdots &\vdots &\vdots &\vdots &\vdots &\vdots &\vdots\\
0 &0 &0 &\ldots &0 &\ldots &0 &1 \\
(-1)^{\tfrac{m}{2}} Q^2 \delta_{2k, m} &0 &0 &\ldots &i^{-m}(-1)^{k + 1} { k\choose k} Q &\ldots &0 &0 \\
\end{array}\right),
\end{equation}
where $\delta_{ij}$ is the Kronecker symbol. 
Similarly to the previous case the initial formal differential expression \eqref{m expr} can be defined 
in the quasi-differential form
\[
l[y] := i^{m} D^{[m]} y,
\]
and it generates the corresponding quasi-differential operators 
$L_{\operatorname{min}}$ and $L_{\operatorname{max}}$.

\begin{remark}
Again, the formulas for the quasi-derivatives depend on the choice of the antiderivative $Q$ of order $k$ of
the distribution $q$ which is not only defined up to a
a polynomial of order $\leq k - 1$. 
However, one can show that the maximal and minimal operators do not depend 
on the choice of this polynomial.\qed
\end{remark}

For $k = 1$ the above regularization was proposed in \cite{SavShkal-99},
and for even $m$ they were announced in \cite{MirzShkal-11}.
The general case is presented here for the first time.
Note that if the distribution $q$ is real-valued, then the operator $L_{\operatorname{min}}$ is symmetric.

\section{Extensions of symmetric quasi-differential operators}

Throughout the rest of the paper we assume the Shin--Zettl matrix is formally self-adjoint, i.e.
$A = A^{+}$. The associated quasi-differential expression $l(y)$ is then
formally self-adjoint, $l(y) = l^+(y)$, and the minimal quasi-differential operator  $L_{\operatorname{min}}$
is symmetric with equal deficiency indices by Theorem \ref{thm_L_adjoint}. 
So one may pose a problem of describing (by means of boundary triplets) various classes of 
extensions of  $L_{\operatorname{min}}$ in  $L_2([a,b], \mathbb{C})$.

For the reader's convenience we give a very short summary of the theory of boundary triplets
based on the results of Rofe-Beketov \cite{RofeB-69-eng} and Kochubei \cite{Koch-75}, 
see also the monograph \cite{Gorb-book-eng} and references therein. 


Let $T$ be a closed densely defined symmetric operator in a Hilbert space $\mathcal{H}$ 
with equal (finite or infinite) deficiency indices.

\begin{definition}[\cite{Gorb-book-eng}]\label{PGZdef}
The triplet $\left( H, \Gamma _1 ,\Gamma _2 \right)$, where $H$ is an auxiliary Hilbert space and
$\Gamma_1$, $\Gamma_2$ are the linear maps from $\operatorname{Dom}(T^*)$ to $H$,
is called  a \emph{boundary triplet} for $T$, if the following two conditions are satisfied:
\begin{enumerate}
\item for any $ f,g \in \operatorname{Dom} \left( {L^*} \right)$ there holds
\[
\left( {T^ *  f,g} \right)_\mathcal{H} - \left( {f,T^ *  g}
\right)_\mathcal{H} = \left( {\Gamma_1 f,\Gamma_2 g} \right)_H  -
\left( {\Gamma_2 f,\Gamma_1 g} \right)_H,
\]
\item for any $ g_1, g_2 \in
H$ there is a vector $ f\in \operatorname{Dom} \left( {T^*} \right)$ such that 
$ \Gamma_1 f = g_1$ and $ \Gamma_2 f = g_2$.
\end{enumerate}
\end{definition}
The above definition implies that $ f \in \operatorname{Dom} \left( {T} \right)$ 
if and only if $\Gamma_1f = \Gamma_2f = 0$.
A boundary triplet $\left( {H,\Gamma _1 ,\Gamma _2 } \right)$ with $\operatorname{dim} H = n$ 
exists for any symmetric operator $T$ with equal non-zero deficiency indices  $(n, n)$\, $(n \leq \infty)$,
but it is not unique.

Boundary triplets may be used to describe all maximal dissipative, maximal accumulative and self-adjoint 
extensions of the symmetric operator in the following way.
Recall that a densely defined linear operator $T$ on a complex Hilbert space $\mathcal{H}$ is called
\emph{dissipative} (resp. \emph{accumulative}) if 
\[
\Im \left( Tf, f \right)_\mathcal{H} \geq 0 \quad (\text{resp.} \leq 0), \quad \text{for all } f \in \text{Dom} (T)
\]
and it is called \emph{maximal dissipative} (resp. \emph{maximal accumulative}) if, in addition,
$T$ has no non-trivial dissipative/accumulative extensions in $\mathcal{H}.$
Every symmetric operator is both dissipative and accumulative, and every self-adjoint operator
is a maximal dissipative and maximal accumulative one.
Thus, if one has a symmetric operator $T$, then one can state the problem of describing 
its maximal dissipative and maximal accumulative extensions.
According to Phillips' Theorem \cite{Phil} (see also \cite[p. 154]{Gorb-book-eng})
every maximal dissipative or accumulative extension of a symmetric operator is a restriction of its 
adjoint operator.  Let $\left(H, \Gamma _1,\Gamma _2 \right)$ be a boundary triplet for $T$.
The following theorem is proved in \cite{Gorb-book-eng}.

\begin{theorem}\label{thm_Gorb}
If $K$ is a contraction on $H$, then the restriction of $T^*$ to the set of the vectors
$f \in \operatorname{Dom} \left(T^*\right)$ satisfying the condition  
\begin{equation}\label{absrt_ext_diss}
\left( {K - I} \right)\Gamma_{1} f + i\left( {K + I} \right)\Gamma_{2} f = 0
\end{equation} 
or 
 \begin{equation}\label{absrt_ext_akk}
\left( {K - I} \right)\Gamma_{1} f - i\left( {K + I} \right)\Gamma_{2} f = 0
\end{equation}
is a maximal dissipative, respectively, maximal accumulative extension of $T$. 
Conversely, any maximal dissipative (maximal accumulative) extension of $L$ is the restriction 
of $T^*$ to the set of vectors $f \in \operatorname{Dom} \left(T^*\right)$, 
satisfying \eqref{absrt_ext_diss} or \eqref{absrt_ext_akk}, respectively,
and the contraction $K$ is uniquely 
defined by the extension. 
The maximal symmetric extensions of  $T$ are described by the conditions  
\eqref{absrt_ext_diss} and \eqref{absrt_ext_akk}, where $K$ is an isometric operator. 
These conditions define a self-adjoint extension if $K$ is unitary. 
\end{theorem}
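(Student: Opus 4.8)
The plan is to reduce the abstract boundary conditions \eqref{absrt_ext_diss}--\eqref{absrt_ext_akk} to a single operator equation on $H$ by passing to the Cayley-type boundary maps $u_f := \Gamma_1 f + i\Gamma_2 f$ and $v_f := \Gamma_1 f - i\Gamma_2 f$, and then to read off dissipativity, maximality and symmetry directly from properties of $K$. First I would record the diagonal form of the Green identity in Definition \ref{PGZdef}(1): taking $g=f$ gives $2i\Im(T^* f,f)_\mathcal{H} = (\Gamma_1 f,\Gamma_2 f)_H - (\Gamma_2 f,\Gamma_1 f)_H$, and a short computation shows $\|u_f\|_H^2 - \|v_f\|_H^2 = 4\Im(\Gamma_1 f,\Gamma_2 f)_H$, so that $\Im(T^* f,f)_\mathcal{H} = \tfrac14\big(\|u_f\|_H^2 - \|v_f\|_H^2\big)$ for every $f\in\operatorname{Dom}(T^*)$. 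A direct rearrangement shows that \eqref{absrt_ext_diss} is equivalent to $K u_f = v_f$ and \eqref{absrt_ext_akk} to $K v_f = u_f$. Hence if $K$ is a contraction, then on the set defined by \eqref{absrt_ext_diss} one has $\|v_f\|_H = \|Ku_f\|_H \le \|u_f\|_H$, so $\Im(T^* f,f)_\mathcal{H}\ge 0$ and the restriction is dissipative; the accumulative case is symmetric. This disposes of the easy direction.

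The core of the argument is the maximality and bijectivity statement. Here I would use that $f\mapsto(u_f,v_f)$ maps $\operatorname{Dom}(T^*)$ onto $H\times H$ (this is the surjectivity axiom Definition \ref{PGZdef}(2) rewritten in the variables $u,v$, since $(g_1,g_2)\mapsto(g_1+ig_2,g_1-ig_2)$ is a bijection of $H\times H$) and has kernel $\operatorname{Dom}(T)$ (by the remark following the definition). Consequently the restrictions of $T^*$ containing $T$ are in bijection with linear subspaces $M\subseteq H\times H$, such a restriction being dissipative exactly when $\|v\|_H\le\|u\|_H$ for all $(u,v)\in M$. By Phillips' theorem every dissipative extension of $T$ is such a restriction, so it suffices to work with these subspaces. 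If $K$ is an everywhere-defined contraction, its graph $M=\{(u,Ku):u\in H\}$ admits no proper enlargement inside $\{\|v\|_H\le\|u\|_H\}$, since any $(0,w)\in M'\supsetneq M$ would force $\|w\|_H\le 0$; this gives maximality. Conversely, given a maximal dissipative extension with subspace $M$, the inequality $\|v\|_H\le\|u\|_H$ forces $M$ to be single-valued in $u$, hence the graph of a contraction $K_0$ on a subspace $\operatorname{Dom}(K_0)\subseteq H$; maximality together with the fact that a contraction on a proper or non-closed subspace always extends to a strictly larger contraction forces $\operatorname{Dom}(K_0)=H$, so that $K_0$ is a contraction on all of $H$, uniquely determined by the extension. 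The accumulative case follows by interchanging $u$ and $v$.

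Finally, for the symmetric and self-adjoint statements I would polarize the diagonal identity into $(u_f,u_g)_H - (v_f,v_g)_H = -2i\big[(\Gamma_1 f,\Gamma_2 g)_H - (\Gamma_2 f,\Gamma_1 g)_H\big]$; by Green's identity the restriction is symmetric precisely when its left-hand side vanishes for all $f,g$ in its domain, i.e. when $(u_f,u_g)_H=(v_f,v_g)_H$. Substituting $v=Ku$, with $u$ ranging over $H$, this reads $K^*K=I$, so symmetric is equivalent to $K$ isometric; since an isometry is a contraction, such an extension is automatically maximal dissipative and hence maximal symmetric (in finite dimensions $\dim H=m$ an isometry is already unitary). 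Computing the boundary subspace of the adjoint in the same variables shows that $\widetilde{T}^*=\widetilde{T}$ holds iff additionally $KK^*=I$, i.e. iff $K$ is unitary, which yields the self-adjoint case.

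I expect the main obstacle to be the maximality equivalence in the second paragraph: justifying that every dissipative restriction of $T^*$ is governed by the graph of a contraction, and that maximality forces this contraction to be defined on all of $H$. The delicate point is the extension of a contraction from a proper subspace to a larger one, handled by passing to the closure and, if necessary, extending by zero on the orthogonal complement, together with the appeal to Phillips' theorem that guarantees no dissipative extension escapes the class of restrictions of $T^*$.
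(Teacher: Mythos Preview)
The paper does not give its own proof of this theorem: it is stated as a quotation, with the line ``The following theorem is proved in \cite{Gorb-book-eng}'' immediately preceding it, and no argument is supplied. So there is nothing in the paper to compare your proposal against.

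That said, your proposal is correct and is precisely the standard Cayley-transform argument one finds in the cited monograph of Gorbachuk--Gorbachuk. The substitution $u_f=\Gamma_1 f+i\Gamma_2 f$, $v_f=\Gamma_1 f-i\Gamma_2 f$ turns the Green identity into $\Im(T^*f,f)_\mathcal{H}=\tfrac14(\|u_f\|_H^2-\|v_f\|_H^2)$ and the boundary conditions into $Ku_f=v_f$ (resp.\ $Kv_f=u_f$), after which dissipativity, maximality, and the symmetric/self-adjoint cases reduce to elementary statements about graphs of contractions, isometries, and unitaries in $H\times H$, exactly as you outline. Your handling of maximality via Phillips' theorem and the extension of a contraction from a subspace to all of $H$ is the right mechanism; the only cosmetic remark is that in the present paper the auxiliary space is $H=\mathbb{C}^m$, so every isometry is already unitary and the distinction between maximal symmetric and self-adjoint collapses, but your argument covers the general case stated in the theorem.
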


\begin{remark}\label{remark_K}
Let $K_1$ and $K_2$ be the unitary operators on $H$ and let the boundary conditions 
 $$ \left( {K_1 - I} \right)\Gamma_{1}y + i\left( {K_1 + I} \right)\Gamma_{2} y = 0$$
and
 $$ \left( {K_2 - I} \right)\Gamma_{1}y - i\left( {K_2 + I} \right)\Gamma_{2} y = 0$$
define self-adjoint extensions. 
These are two different bijective parameterizations, which reflects the fact
that each self-adjoint operators is maximal dissipative and a maximal accumulative one at the same time.
The extensions, given by these boundary conditions coincide if $K_1 = K_2^{-1}$.
Indeed, the boundary conditions can be written in another form: 
\begin{gather*}
K_1\left(\Gamma _{1}y + i \Gamma _{2}y \right) = \Gamma _{1}y - i\Gamma _{2}y, \qquad 
	  \Gamma _{1}y - i\Gamma _{2}y \in \operatorname{Dom}(K) = H,\\
K_2\left(\Gamma _{1}y - i \Gamma _{2}y \right) = \Gamma _{1}y + i\Gamma _{2}y, \qquad 
	  \Gamma _{1}y + i\Gamma _{2}y \in \operatorname{Dom}(K) = H,
\end{gather*}
and the equivalence of the boundary conditions reads as $K_1K_2 =  K_2K_1 = I$.\qed
\end{remark}

Let us get back to the quasi-differential operators.
The following result is crucial for the rest of the paper as it allows to apply the boundary triplet machinery
to the symmetric minimal quasi-differential operator $L_{\operatorname{min}}$.

\begin{lemma}
  \label{PGZth} 
  Define linear maps $\Gamma_{[1]}$, $\Gamma_{[2]}$  from $\operatorname{Dom}(L_{\operatorname{max}})$ to
$\mathbb{C}^{m}$ as follows:
for $m = 2n$ and $n \geq 2$ we set
\begin{equation}\label{Gamma 2n}
\Gamma_{[1]}y := i^{2n}
 \left(
   \begin{array}{c}
   - D^{[2n - 1]}y(a),\\
   ...,\\
   (-1)^nD^{[n]}y(a),\\
   D^{[2n - 1]}y(b),\\
   ...,\\
   (-1)^{n - 1}D^{[n]}y(b)
   \end{array}
 \right),
 \quad\Gamma_{[2]}y :=
  \left(
   \begin{array}{c}
   D^{[0]}y(a),\\
   ...,\\
   D^{[n - 1]}y(a),\\
   D^{[0]}y(b),\\
   ...,\\
   D^{[n - 1]}y(b)
   \end{array}
  \right)
\end{equation}
and for $m = 2n + 1$ and $n \in \mathbb{N}$ we set
\begin{equation}\label{Gamma 2n+1}
\Gamma_{[1]}y := i^{2n + 1}
   \left(
    \begin{array}{c}
    - D^{[2n]}y(a),\\
    ...,\\
    (-1)^{n} D^{[n + 1]}y(a),\\
    D^{[2n]}y(b),\\
    ....,\\
    (-1)^{n - 1}D^{[n + 1]}y(b),\\
    \alpha D^{[n]}y(b) + \beta D^{[n]}y(a)
    \end{array}
   \right),
  \quad
 \Gamma_{[2]}y :=
   \left(
   \begin{array}{c}
   D^{[0]}y(a),\\
   ...,\\
   D^{[n -1]}y(a),\\
   D^{[0]}y(b),\\
   ...,\\
   D^{[n - 1]}y(b),\\
   \gamma D^{[n]}y(b) + \delta D^{[n]}y(a)
   \end{array}
  \right),
\end{equation}
where
\[
\alpha = 1, \quad
\beta = 1, \quad
\gamma = \frac{(-1)^n}{2} + i,\quad
 \delta = \frac{(-1)^{n + 1}}{2} + i.
 \]
Then $(\mathbb{C}^{m}, \Gamma_{[1]}, \Gamma_{[2]})$ is a boundary triplet for $L_{\operatorname{min}}$.
\end{lemma}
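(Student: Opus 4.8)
The plan is to verify the two defining conditions of a boundary triplet from Definition~\ref{PGZdef} for the candidate maps $\Gamma_{[1]}$, $\Gamma_{[2]}$, using the Lagrange identity (Lemma~\ref{lm_Lagrange}) for the abstract Green formula and the surjectivity result (Lemma~\ref{lm_surj}) for the surjectivity condition. By Theorem~\ref{thm_L_adjoint}, in the formally self-adjoint case we have $L_{\operatorname{min}}^* = L_{\operatorname{max}}$, so the maps are defined on $\operatorname{Dom}(L_{\operatorname{min}}^*)$ as required, and the deficiency indices are $(m,m)$, matching $\dim H = m$.

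First I would establish the Green formula, condition~(1). Starting from Lemma~\ref{lm_Lagrange}, the inner product difference $(L_{\operatorname{max}}y,z)-(y,L_{\operatorname{max}}z)$ equals $i^{2m}$ (a sign $(-1)^m$) times the boundary bilinear form $\sum_{k=1}^{m}(-1)^{k-1}D^{[m-k]}y\cdot\overline{D^{[k-1]}z}\big|_a^b$. The core computation is to rewrite this boundary form as $(\Gamma_{[1]}y,\Gamma_{[2]}z)_{\mathbb{C}^m}-(\Gamma_{[2]}y,\Gamma_{[1]}z)_{\mathbb{C}^m}$. This amounts to pairing each quasi-derivative value $D^{[m-k]}y(\cdot)$ against its complementary partner $D^{[k-1]}z(\cdot)$ with the correct sign and factor of $i^m$; the components of $\Gamma_{[1]}$ are designed precisely so that the ``high'' quasi-derivatives $D^{[m-1]},\dots,D^{[\lceil m/2\rceil]}$ (carrying the $i^m$ prefactor and alternating signs) pair against the ``low'' quasi-derivatives $D^{[0]},\dots,D^{[\lfloor m/2\rfloor -1]}$ appearing in $\Gamma_{[2]}$. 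I expect this to reduce to a bookkeeping identity for the alternating signs and the conjugation, split according to whether $m$ is even or odd.

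The even case $m=2n$ is the cleaner one: the boundary form splits symmetrically into an ``$a$-part'' and a ``$b$-part'', and each quasi-derivative of order $\geq n$ appears exactly once in $\Gamma_{[1]}$ while each of order $<n$ appears once in $\Gamma_{[2]}$, so matching the two sides is a direct check that the signs $(-1)^n$, $(-1)^{n-1}$ in \eqref{Gamma 2n} reproduce the $(-1)^{k-1}$ pattern. The odd case $m=2n+1$ is the main obstacle: here the middle quasi-derivative $D^{[n]}$ is self-complementary (it would have to pair with itself), so it cannot simply be assigned to one map. This is exactly why the last components of $\Gamma_{[1]}$ and $\Gamma_{[2]}$ in \eqref{Gamma 2n+1} are the specially chosen linear combinations $\alpha D^{[n]}y(b)+\beta D^{[n]}y(a)$ and $\gamma D^{[n]}y(b)+\delta D^{[n]}y(a)$. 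I would verify that the constants $\alpha=\beta=1$, $\gamma=\frac{(-1)^n}{2}+i$, $\delta=\frac{(-1)^{n+1}}{2}+i$ are tuned so that the contribution $(-1)^n(D^{[n]}y\cdot\overline{D^{[n]}z})\big|_a^b$ of the middle term is absorbed correctly into the sesquilinear difference; this is the delicate algebraic identity to check, and it is where the precise values of $\gamma,\delta$ (with their imaginary parts matching and real parts of opposite sign) earn their keep.

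Finally, for condition~(2), the surjectivity of the combined map $(\Gamma_{[1]},\Gamma_{[2]})\colon \operatorname{Dom}(L_{\operatorname{max}})\to \mathbb{C}^m\oplus\mathbb{C}^m$, I would argue as follows. By Lemma~\ref{lm_surj} the $2m$ boundary values $D^{[k]}y(a)$, $D^{[k]}y(b)$ for $k=0,\dots,m-1$ can be prescribed arbitrarily and independently. Since $\Gamma_{[1]}y$ and $\Gamma_{[2]}y$ are given by a fixed linear transformation of these $2m$ values, it suffices to check that this $2m\times 2m$ transformation is invertible. In the even case the transformation is block-diagonal with nonzero scalar entries $\pm i^{2n}$ and $1$, hence trivially invertible. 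In the odd case invertibility reduces to the nonvanishing of the $2\times 2$ determinant $\det\begin{pmatrix}\alpha&\beta\\\gamma&\delta\end{pmatrix}=\alpha\delta-\beta\gamma$ governing the middle pair; with the stated constants this determinant equals $\delta-\gamma=(-1)^{n+1}\neq 0$, establishing surjectivity and completing the verification that $(\mathbb{C}^m,\Gamma_{[1]},\Gamma_{[2]})$ is a boundary triplet.
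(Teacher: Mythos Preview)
Your proposal is correct and follows essentially the same route as the paper: invoke $L_{\operatorname{min}}^*=L_{\operatorname{max}}$ via Theorem~\ref{thm_L_adjoint}, verify condition~(1) of Definition~\ref{PGZdef} by splitting the Lagrange identity of Lemma~\ref{lm_Lagrange} into $a$- and $b$-blocks (plus the mixed middle block in the odd case) and matching signs, then obtain condition~(2) from Lemma~\ref{lm_surj} together with the nonvanishing of $\alpha\delta-\beta\gamma$. One small slip: the prefactor coming from $l(y)=i^mD^{[m]}y$ is $i^{m}$, not $i^{2m}=(-1)^m$; this is exactly the $i^{2n}$ (resp.\ $i^{2n+1}$) that the paper carries through and that appears in the definition of $\Gamma_{[1]}$, so be sure to track it correctly when you do the sign bookkeeping.
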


\begin{remark}
The values of the coefficients $\alpha$, $\beta$, $\gamma$, $\delta$
for the odd case may be replaced by an arbitrary set of numbers satisfying the conditions
\begin{equation}\label{PGZ coef}
\begin{gathered}
    \alpha\overline{\gamma} + \overline{\alpha}\gamma = (-1)^{n},\quad
    \beta\overline{\delta} + \overline{\beta}\delta = (-1)^{n + 1}, \quad
    \alpha\overline{\delta} + \overline{\beta}\gamma = 0, \\
    \beta\overline{\gamma}  + \overline{\alpha}\delta = 0, \quad
    \alpha\delta - \beta\gamma \neq 0.
\end{gathered}
\end{equation}
\end{remark}

\begin{proof}
We need to check that the triplet $(\mathbb{C}^{m}, \Gamma_{[1]}, \Gamma_{[2]})$ 
satisfies the conditions $1)$ and $2)$
in Definition \ref{PGZdef} for  $T=L_{\operatorname{min}}$ 
and $\mathcal{H} = L_2\left([a,b], \mathbb{C}\right)$.
Due to Theorem \ref{thm_L_adjoint}, $L^*_{\operatorname{min}} = L_{\operatorname{max}}$.

Let us start with the case of even order. 
Due to Lemma \ref{lm_Lagrange}, for $m =2n$:
\[\left( {L_{\operatorname{max}}y,z} \right) - \left( y,L_{\operatorname{max}}z\right)  =
i^{2n}\sum\limits_{k = 1}^{2n}(-1)^{k - 1}{D^{[2n - k]}y\cdot\overline {D^{[k -1]}z}}\left|_{t = a}^{t = b}
\right..\]

Denote
\[
\Gamma_{[1]} =: \left(\Gamma_{1a}, \Gamma_{1b}\right),\quad
\Gamma_{[2]} =: \left(\Gamma_{2a}, \Gamma _{2b}\right),
\]
where
\begin{align*}
\Gamma_{1a}y &= i^{2n}\left( - D^{[2n - 1]}y(a), ... , (-1)^{n} D^{[n]}y(a)\right),\\
\Gamma_{1b}y &= i^{2n}\left(D^{[2n - 1]}y(b), ... , (-1)^{n - 1} D^{[n]}y(b)\right), \\
\Gamma_{2a}y &= \left(D^{[0]}y(a), ..., D^{[n - 1]}y(a)\right),\\
\Gamma_{2b}y &= \left(D^{[0]}y(b), ..., D^{[n - 1]}y(b)\right).
\end{align*}
One calculates 
\begin{align*}
&\left( {\Gamma _{1a} y,\Gamma _{2a} z} \right)  = i^{2n}
    \sum\limits_{k = 1}^{n}(-1)^{k}{D^{[2n - k]}y(a)\cdot\overline {D^{[k -1]}z(a)}},\\
&\left({\Gamma _{2a} y, \Gamma _{1a} z} \right) = i^{2n}
    \sum\limits_{k = n + 1}^{2n}(-1)^{k - 1}{D^{[2n - k]}y(a)\cdot\overline {D^{[k -1]}z(a)}},\\
&\left( {\Gamma _{1b} y,\Gamma _{2b} z} \right)  = i^{2n}
    \sum\limits_{k = 1}^{n}(-1)^{k - 1}{D^{[2n - k]}y(b)\cdot\overline {D^{[k -1]}z(b)}},\\
&\left({\Gamma _{2b} y, \Gamma _{1b} z} \right) = i^{2n}
    \sum\limits_{k = n + 1}^{2n}(-1)^{k}{D^{[2n - k]}y(b)\cdot\overline {D^{[k -1]}z(b)}},
\end{align*}
which results in
\begin{align*}
\left( {\Gamma _{[1]} y,\Gamma _{[2]} z} \right) &= i^{2n}\sum\limits_{k = 1}^{n}
(-1)^{k - 1}{D^{[2n - k]}y\cdot\overline {D^{[k -1]}z}}\left|_{t = a}^{t = b}\right.,\\
\left({\Gamma _{[2]} y,\Gamma _{[1]} z} \right) &= i^{2n}\sum\limits_{k = n + 1}^{2n}
(-1)^{k}{D^{[2n - k]}y\cdot\overline {D^{[k -1]}z}}\left|_{t = a}^{t = b}\right.,
\end{align*}
and this means that the condition $1)$ of the Definition \ref{PGZdef} is fulfilled,
and the surjectivity condition $2)$ is true due to Lemma \ref{lm_surj}.

The case of odd order is treated similarly.
Due to Lemma \ref{lm_Lagrange}, for $m = 2n + 1$ we have
\[
\left( {L_{\operatorname{max}}y,z} \right) - \left( y,L_{\operatorname{max}}z\right)  =
i^{2n + 1}\sum\limits_{k = 1}^{2n + 1}(-1)^{k - 1}{D^{[2n - k]}y\cdot\overline {D^{[k -1]}z}}\left|_{t = a}^{t = b}
\right..
\]

Denote
\[
\Gamma_{[1]} =: \left(\Gamma_{1a}, \Gamma_{1b}, \Gamma_{1ab}\right),\quad
\Gamma_{[2]} =: \left(\Gamma_{2a}, \Gamma_{2b}, \Gamma_{2ab}\right),
\]
where 
\begin{align*}
\Gamma_{1a}y &= i^{2n + 1}\left( - D^{[2n]}y(a), ... , (-1)^{n} D^{[n + 1]}y(a)\right),\\
\Gamma_{1b}y &= i^{2n + 1}\left(D^{[2n]}y(b), ... , (-1)^{n + 1} D^{[n + 1]}y(b)\right), \\
\Gamma _{1ab}y &= i^{2n + 1}\left(\alpha D^{[n]}y(b) + \beta D^{[n]}y(a)\right),\\
\Gamma _{2a}y &= \left(D^{[0]}y(a), ..., D^{[n - 1]}y(a)\right),\\
\Gamma _{2b}y &= \left(D^{[0]}y(b), ..., D^{[n - 1]}y(b)\right),\\
\Gamma _{2ab}y &= \gamma D^{[n]}y(b) + \delta D^{[n]}y(a).
\end{align*}
One calculates
\begin{align*}
&\left( {\Gamma _{1a} y,\Gamma _{2a} z} \right)  = i^{2n + 1}
    \sum\limits_{k = 1}^{n}(-1)^{k - 1}{D^{[2n - k]}y(a)\cdot\overline {D^{[k -1]}z(a)}},\\
&\left({\Gamma _{2a} y, \Gamma _{1a} z} \right) = i^{2n + 1}
    \sum\limits_{k = n + 2}^{2n + 1}(-1)^{k}{D^{[2n - k]}y(a)\cdot\overline {D^{[k -1]}z(a)}},\\
&\left( {\Gamma _{1b} y,\Gamma _{2b} z} \right)  = i^{2n + 1}
    \sum\limits_{k = 1}^{n}(-1)^{k - 1}{D^{[2n - k]}y(b)\cdot\overline {D^{[k -1]}z(b)}},\\
&\left({\Gamma _{2b} y, \Gamma _{1b} z} \right) = i^{2n + 1}
    \sum\limits_{k = n + 2}^{2n + 1}(-1)^{k}{D^{[2n - k]}y(b)\cdot\overline {D^{[k -1]}z(b)}},\\
&\left( {\Gamma_{1ab} y,\Gamma_{2ab} z} \right) - \left( {\Gamma _{2ab} y,\Gamma _{1ab} z} \right) =\\
&\qquad = i^{2n + 1}(-1)^{n} \left(D^{[n]}y(b)\cdot\overline {D^{[n]}z(b)} - D^{[n]}y(a)\cdot\overline {D^{[n]}z(a)}\right),
\end{align*}
which shows that the condition $1)$ of Definition \ref{PGZdef} is satisfied. 
Now take arbitrary vectors $f_1~=~(f_{1,k})_{k = 0}^{2n},\,f_2~=~(f_{2,k})_{k = 0}^{2n} \in \mathbb{C}^{2n + 1}$.
The last condition in \eqref{PGZ coef} means that the system
\[
\left\{
\begin{array}{l}
\alpha \beta_n + \beta \alpha_n = f_{1, n}\\
\gamma \beta_n + \delta \alpha_n = f_{2, n}\\
\end{array}
\right.
\]
has a unique solution $(\alpha_n,\beta_n)$.
Denoting
\begin{align*}
\alpha_k &:= f_{1, k}, &\beta_k &:= f_{2, k} && \text{for }k < n,\\
\alpha_k & := (-1)^{2n + 1 - k}f_{1, k},& \beta_k &:= (-1)^{2n - k}f_{2, k}&& \text{for } n + 1 < k < 2n
\end{align*}
we obtain two vectors 
$(\alpha _0 ,\alpha _1, ..., \alpha _{m - 1}),(\beta _0 ,\beta _1, ..., \beta_{m - 1})\in\mathbb{C}^m
\equiv \mathbb{C}^{2n+1}$.
By Lemma \ref{lm_surj}, 
there exists a function ${y \in \text{Dom}(L_{\operatorname{max}})}$ such that 
\[
D^{[k]}y(a) = \alpha _k , \quad D^{[k]}y(b) = \beta _k, \quad k = 0,1, ..., m - 1,
\]
and due to above special choice of $\alpha$ and $\beta$ one has
$\Gamma_{[1]}y=f_1$ and $\Gamma_{[2]}y=f_2$, so the surjectivity condition 
of Definition \ref{PGZdef} holds.
\end{proof}

For the sake of convenience, we introduce the following notation. Denote by $L_K$ the restriction of 
$L_{\operatorname{max}}$ onto the set of the functions
$y(t) \in \operatorname{Dom}(L_{\operatorname{max}})$ satisfying the homogeneous boundary condition 
in the canonical form
\begin{equation} \label{diss_ext}
\left( K - I \right)\Gamma_{[1]} y + i\left( K + I \right)\Gamma_{[2]} y = 0.
\end{equation}
Similarly, denote by $L^K$ the restriction of $L_{\operatorname{max}}$ onto the set of the functions
$y(t) \in \operatorname{Dom}(L_{\operatorname{max}})$ satisfying the boundary condition
\begin{equation} \label{akk_ext}
 \left( K - I \right)\Gamma_{[1]} y - i\left( K + I \right)\Gamma_{[2]} y = 0.
\end{equation}
Here  $K$ is an arbitrary bounded operator on the Hilbert space $\mathbb{C}^{m}$, 
and the maps $\Gamma_{[1]}$ è $\Gamma_{[2]}$ are defined by the formulas \eqref{Gamma 2n} or \eqref{Gamma 2n+1}
depending on $m$.
Theorem \ref{thm_Gorb} and Lemma \ref{PGZth} lead to the following description  
of extensions of  $L_{\operatorname{min}}$.

\begin{theorem}\label{thm_ext}
Every $L_K$ with $K$ being a contracting operator in $\mathbb{C}^{m}$,
 is a maximal dissipative extension of   $L_{\operatorname{min}}$.
Similarly every $L^K$ with $K$ being a contracting operator in $\mathbb{C}^{m}$,
is a maximal accumulative extension of the operator $L_{\operatorname{min}}$.
Conversely, for any maximal dissipative (respectively, maximal accumulative) extension $\widetilde{L}$ 
of the operator $L_{\operatorname{min}}$ there exists a contracting operator $K$ such that 
$\widetilde{L} = L_K$\,\, (respectively, $\widetilde{L} = L^K$). 
The extensions $L_K$ and $L^K$ are self-adjoint if and only if $K$ is a unitary operator on $\mathbb{C}^{m}$. 
These correspondences between operators $\{K\}$ and the extensions $\{\widetilde{L}\}$ are all bijective.
\end{theorem}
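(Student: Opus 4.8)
The plan is to recognize Theorem \ref{thm_ext} as nothing more than the translation of the abstract boundary-triplet result, Theorem \ref{thm_Gorb}, into the concrete quasi-differential setting, with essentially all of the analytic content already absorbed into Lemma \ref{PGZth}. Once one knows that $(\mathbb{C}^{m}, \Gamma_{[1]}, \Gamma_{[2]})$ is a boundary triplet for the symmetric operator $L_{\operatorname{min}}$, the classification of its extensions is automatic. So the first step is simply to set $T := L_{\operatorname{min}}$, $\mathcal{H} := L_2([a,b],\mathbb{C})$, $H := \mathbb{C}^{m}$, $\Gamma_1 := \Gamma_{[1]}$, and $\Gamma_2 := \Gamma_{[2]}$, and to invoke Lemma \ref{PGZth} to certify that the hypotheses of Theorem \ref{thm_Gorb} hold.

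Next I would align the remaining data. By Theorem \ref{thm_L_adjoint} we have $L_{\operatorname{min}}^{*} = L_{\operatorname{max}}$, so the restrictions of $L_{\operatorname{max}}$ appearing in the definitions of $L_K$ and $L^K$ are precisely the restrictions of $T^{*}$ on which Theorem \ref{thm_Gorb} operates. Moreover the defining boundary conditions \eqref{diss_ext} and \eqref{akk_ext} are, letter for letter, the abstract conditions \eqref{absrt_ext_diss} and \eqref{absrt_ext_akk} after the substitution of $\Gamma_{[1]}, \Gamma_{[2]}$ for $\Gamma_{1}, \Gamma_{2}$. Hence Theorem \ref{thm_Gorb} applies verbatim: contractions $K$ on $\mathbb{C}^{m}$ parametrize through \eqref{diss_ext} exactly the maximal dissipative extensions of $L_{\operatorname{min}}$, and through \eqref{akk_ext} exactly the maximal accumulative ones; every such extension arises in this way; and the uniqueness clause of Theorem \ref{thm_Gorb} yields the asserted bijectivity of the correspondences $K \leftrightarrow L_K$ and $K \leftrightarrow L^K$, since $L_{K_1} = L_{K_2}$ forces $K_1 = K_2$.

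The self-adjointness criterion is the final item. A self-adjoint extension is simultaneously maximal dissipative and maximal accumulative, so it is represented by some contraction $K$ in the $L_K$ parametrization; being self-adjoint it is in particular maximal symmetric, whence $K$ is isometric, and conversely a unitary $K$ yields a self-adjoint $L_K$ by the last assertion of Theorem \ref{thm_Gorb}. Here the finite dimension of $H = \mathbb{C}^{m}$ is a convenience rather than an obstacle: an isometry of $\mathbb{C}^{m}$ is automatically surjective and hence unitary, so the isometric and unitary cases merge and no separate treatment of a properly symmetric (non-self-adjoint) maximal extension is required.

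I do not expect a genuine analytic obstacle, since the heavy lifting lies in Lemma \ref{PGZth}; the only point demanding care is bookkeeping. One must confirm that the sign and complex-conjugation conventions built into the definitions of $\Gamma_{[1]}$ and $\Gamma_{[2]}$ make \eqref{diss_ext}--\eqref{akk_ext} coincide with \eqref{absrt_ext_diss}--\eqref{absrt_ext_akk}, and that the orientation of the Green-type identity established in the proof of Lemma \ref{PGZth} matches the sign convention in condition $1)$ of Definition \ref{PGZdef}. This compatibility of conventions, rather than any estimate, is the sole step needing verification, and it has effectively been settled already when the boundary triplet was constructed.
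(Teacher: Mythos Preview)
Your proposal is correct and matches the paper's approach exactly: the paper itself states that Theorem~\ref{thm_ext} follows directly from Theorem~\ref{thm_Gorb} together with Lemma~\ref{PGZth}, with no further argument given. Your additional remark that in $\mathbb{C}^{m}$ isometries are automatically unitary (so the maximal symmetric and self-adjoint cases coincide) is a useful clarification that the paper leaves implicit.
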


\begin{remark}
The self-adjoint extensions of a symmetric minimal quasi-differential operator were described by means 
of the Glasman-Krein-Naimark theory in the work \cite{Zettl-75} and several subsequent papers. 
However, the description by means of boundary triplets has important advantages, namely,
it gives a bijective parametrization of extensions by unitary operators,
and one can describe the maximal dissipative and the maximal accumulative extensions
in a similar way.
\end{remark}

\section{Real extensions}

Recall that a linear operator $L$ acting in $L_2([a,b], \mathbb{C})$ is called \textit{real} if:
\begin{enumerate}
\item  For every function $f$ from  $\operatorname{Dom}(L)$ the complex conjugate function $\overline{f}$ 
also lies in $\operatorname{Dom}(L)$.
\item The operator $L$ maps complex conjugate functions into complex conjugate functions, that is 
$L(\overline{f}) = \overline{L(f)}$.
\end{enumerate}

If the minimal quasi-differential operator is real, one arrives at the natural question
on how to describe its real extensions. The following theorem holds.

\begin{theorem}
Let $m$ be even, and let the entries of the Shin--Zettl matrix $A = A^+$ be real-valued, then
the maximal and minimal quasi-differential operators $L_{\operatorname{max}}$ and $L_{\operatorname{min}}$ 
generated by $A$ are real. All real maximal dissipative and maximal accumulative extensions of the real symmetric quasi-differential operator 
$L_{\operatorname{min}}$ of the even order are self-adjoint.
The self-adjoint extensions $L_K$ or $L^K$ are real if and only if the unitary matrix $K$ is symmetric.
\end{theorem}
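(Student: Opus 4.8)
The plan is to treat the three assertions in turn: the first is a structural fact about the quasi-derivatives, the second is a spectral-theoretic consequence of realness, and the third is an algebraic translation of realness into a condition on $K$.

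First I would establish that $L_{\operatorname{max}}$ and $L_{\operatorname{min}}$ are real. The heart of the matter is the identity $D^{[k]}(\overline y) = \overline{D^{[k]}y}$ for $k = 0,1,\ldots,m$, which I would prove by induction on $k$ straight from the recursive definition of the quasi-derivatives: since $A = A^+$ has real entries, every coefficient $a_{k,s}$ appearing in the recursion is real, and complex conjugation commutes with differentiation of absolutely continuous functions and with multiplication by real functions, so the conjugation passes through each step unchanged. Because $m$ is even, $i^m = (-1)^{m/2} \in \mathbb{R}$, whence $l(\overline y) = i^m D^{[m]}(\overline y) = i^m\overline{D^{[m]}y} = \overline{l(y)}$. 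The same identity shows that the membership conditions defining $\operatorname{Dom}(L_{\operatorname{max}})$, as well as the vanishing boundary values defining $\operatorname{Dom}(L_{\operatorname{min}})$, are invariant under $y\mapsto\overline y$; together these two facts say precisely that $L_{\operatorname{max}}$ and $L_{\operatorname{min}}$ are real. I note that evenness is essential, since for odd $m$ the factor $i^m$ is purely imaginary and the argument fails.

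Next I would show that a real maximal dissipative extension $\widetilde L$ of $L_{\operatorname{min}}$ is self-adjoint. Using $(\overline u,\overline v) = \overline{(u,v)}$ together with realness, for every $f\in\operatorname{Dom}(\widetilde L)$ one gets $(\widetilde L\,\overline f,\overline f) = \overline{(\widetilde L f,f)}$, hence $\Im(\widetilde L\,\overline f,\overline f) = -\Im(\widetilde L f,f)$. Since $\overline f$ again lies in $\operatorname{Dom}(\widetilde L)$, dissipativity applied to both $f$ and $\overline f$ forces $\Im(\widetilde L f,f)=0$ for all $f$, so $\widetilde L$ is symmetric. As $\widetilde L$ is real, so is $\widetilde L^*$, and the conjugation $f\mapsto\overline f$ maps $\ker(\widetilde L^*-i)$ anti-linearly and bijectively onto $\ker(\widetilde L^*+i)$; hence the deficiency indices of $\widetilde L$ are equal and it admits a self-adjoint extension $\widehat L$. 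Being self-adjoint, $\widehat L$ is in particular dissipative and extends the maximal dissipative operator $\widetilde L$, so maximality gives $\widehat L = \widetilde L$, i.e. $\widetilde L$ is self-adjoint. The maximal accumulative case is identical with the inequality reversed (or follows by replacing $\widetilde L$ with $-\widetilde L$).

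Finally I would characterize when the self-adjoint extension $L_K$ is real. From \eqref{Gamma 2n}, the identity $D^{[k]}(\overline y) = \overline{D^{[k]}y}$, and the reality of the scalar factor $i^{2n}$, one checks $\Gamma_{[1]}(\overline y) = \overline{\Gamma_{[1]}y}$ and $\Gamma_{[2]}(\overline y) = \overline{\Gamma_{[2]}y}$. Since $L_{\operatorname{max}}$ is real, $L_K$ is real exactly when $\operatorname{Dom}(L_K)$ is conjugation-invariant. Writing the defining condition \eqref{diss_ext} in the equivalent form $K(p+iq) = p-iq$ of Remark \ref{remark_K}, with $p:=\Gamma_{[1]}y$ and $q:=\Gamma_{[2]}y$, and requiring $\overline y$ to satisfy it as well, gives $K(\overline p + i\overline q) = \overline p - i\overline q$. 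Using $\overline p + i\overline q = \overline{p-iq}$ and $\overline p - i\overline q = \overline{p+iq}$, and parametrizing the solution set of the first condition by $\xi := p+iq\in\mathbb{C}^m$ (so that $p-iq = K\xi$, admissible for every $\xi$ since $K$ is unitary and the triplet is surjective), the conjugated condition becomes $K\,\overline{K\xi} = \overline\xi$ for all $\xi$, that is $K\overline K = I$. Combined with unitarity $K^{-1}=K^*=\overline K^{T}$, this reads $\overline K = \overline K^{T}$, i.e. $K = K^{T}$, so $L_K$ is real iff $K$ is symmetric; the case of $L^K$ is the same with $i$ replaced by $-i$. I expect the main obstacle to lie in this last paragraph: carefully verifying the conjugation-equivariance of $\Gamma_{[1]},\Gamma_{[2]}$ against the explicit signs and powers of $i$ in \eqref{Gamma 2n}, and then passing the conjugation cleanly through the boundary condition to isolate the matrix identity $K\overline K = I$ and recognize it as the symmetry of $K$.
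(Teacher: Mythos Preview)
Your proof is correct. The first and third parts align closely with the paper's own argument: the paper also derives $D^{[k]}\overline y=\overline{D^{[k]}y}$ from reality of the entries and (implicitly) uses $i^{2n}\in\mathbb{R}$, then verifies the conjugation-equivariance $\Gamma_{[j]}\overline y=\overline{\Gamma_{[j]}y}$ and reduces the reality of $L_K$ to the matrix identity $K^{-1}=\overline K$, hence $K=K^{T}$ by unitarity. Your parametrization $\xi=p+iq$ is just an explicit unpacking of the paper's appeal to Remark~\ref{remark_K}.

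The genuine difference is in the second assertion. The paper does \emph{not} argue abstractly via the quadratic form; instead it invokes Theorem~\ref{thm_ext} to write the given real maximal dissipative extension as some $L_K$, observes that realness yields $\operatorname{Dom}(L_K)\subset\operatorname{Dom}(L^{\overline K})$ (by conjugating the boundary condition), whence $L_K\subset L^{\overline K}$ is simultaneously dissipative and accumulative, hence symmetric; finiteness of the deficiency indices of $L_{\operatorname{min}}$ then forces $L_K=L^{\overline K}$ to be self-adjoint. This couples the second and third assertions and yields the symmetry of $K$ in the same stroke. Your route, by contrast, bypasses the boundary triplet entirely for this step: the identity $\Im(\widetilde L\,\overline f,\overline f)=-\Im(\widetilde Lf,f)$ forces symmetry directly, and equality of deficiency indices under conjugation plus maximality finishes. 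The advantage of your argument is that it is a general fact about real maximal dissipative extensions of real symmetric operators, independent of any particular parametrization; the paper's argument is shorter in context because it reuses the $L_K/L^K$ machinery already in place and simultaneously produces the condition $K^{-1}=\overline K$.
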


\begin{proof}
As the coefficients of the quasi-derivatives are real-valued functions, one has
\[
D^{[i]}\overline{y} = \overline{D^{[i]}y}, \qquad i = \overline{1, 2n},
\]
which implies $l(\overline{y}) = \overline{l(y)}$.
Thus for any $y \in \operatorname{Dom}(L_{\operatorname{max}})$ we have
\[
D^{[i]}\overline{y} \in AC([a,b], \mathbb{C}), \,\,
i = \overline{1, 2n - 1}, \quad l(\overline{y})  \in L_2([a,b], \mathbb{C}), \quad
L_{\operatorname{max}}(\overline{y}) = \overline{L_{\operatorname{max}}(y)}.
\]
This shows that the operator $L_{\operatorname{max}}$ is real.
Similarly, for $y \in \operatorname{Dom}(L_{\operatorname{min}})$ we have
\[
D^{[i]}\overline{y}(a) =  \overline{D^{[i]}y}(a) = 0, \quad D^{[i]}\overline{y}(b) =  \overline{D^{[i]}y}(b) = 0, 
\quad i = \overline{1, 2n - 1},
\]
which proves that $L_{\operatorname{min}}$ is a real as well.

Due to the real-valuedness of the coefficients of the quasi-derivatives, the equalities \eqref{Gamma 2n} imply
\[
\Gamma_{[1]}\overline{y} = \overline{\vphantom{\Gamma^1}\Gamma_{[1]}y}, \quad  
\Gamma_{[2]}\overline{y} = \overline{\vphantom{\Gamma^1}\Gamma_{[2]}y}.
\]
As the maximal operator is real, any of its restrictions satisfies the condition $2)$ 
of the above definition of a real operator, so we are reduced to check the condition $1)$.

Let $L_K$ be an arbitrary real maximal dissipative extension given by the boundary conditions \eqref{diss_ext},
then for any $y \in \operatorname{Dom}(L_K)$ the complex conjugate $\overline{y} $ satisfies \eqref{diss_ext} too, 
that is 
\[
\left( K - I \right)\Gamma_{[1]} \overline{y}  + i\left( K + I \right)\Gamma_{[2]} \overline{y}  = 0.
\] 
By taking the complex conjugates we obtain
\[
\left(\overline{K\vphantom{K^1}} - I\right)\Gamma_{[1]} y  - 
	i\left(\overline{K\vphantom{K^1}} + I\right)\Gamma_{[2]}y  = 0,
\]
and $L_K \subset L^{\overline{K}}$ due to Theorem \ref{thm_ext}.
Thus, the dissipative extension $L_K$ is also accumulative,
which means that it is symmetric. But $L_K$ is a maximal
dissipative extension of $L_{\operatorname{min}}$. 
As the deficiency indices of $L_{\operatorname{min}}$ are finite, 
the operator $L_K = L^{\overline{K}}$ must be self-adjoint.
Furthermore, due to Remark \ref{remark_K} the equality $L_K = L^{\overline{K}}$ is equivalent
to $K^{-1} = \overline{K}$. As $K$ is unitary, we have $K^{-1} = \overline{K^T}$, which gives
$K= K^T$. 
In a similar way one can show that a maximal accumulative extension $L^K$ is real if and only if it is 
self-adjoint and $K$ = $K^T$. 
\end{proof}

\section{Separated boundary conditions}

Now we would like to discuss the extensions defined by the so-called separated boundary conditions.

Denote by $\mathbf{f_a}$ the germ of a continuous function $f$ at the point $a$.
We recall that the boundary conditions that define an operator $L \subset L_{\operatorname{max}}$ are called \emph{separated} if 
for any $y \in \operatorname{Dom}(L)$ and any
$g, h \in \operatorname{Dom}(L_{\operatorname{max}})$ with
\[
\mathbf{g_a} = \mathbf{y_a},\quad  \mathbf{g_b} = 0,\quad
\mathbf{h_a} = 0,\quad  \mathbf{h_b} = \mathbf{y_b}.
\]
we have $g, h \in \operatorname{Dom}(L)$.

The following statement gives a description of the operators $L_K$ and $L^K$ with
separated boundary conditions in the case of an even order $m = 2n$ .

\begin{theorem}\label{thm_divided adj}
The boundary conditions \eqref{diss_ext} and \eqref{akk_ext} defining $L_K$ and $L^K$ respectively 
are separated if and only if
the matrix $K$ has the block form 
\begin{equation} \label{separable cond}
K = \left(
\begin{array}{cc}
  K_a & 0 \\
  0 & K_b \\
\end{array}
\right),
 \end{equation}
where $K_a$ and $K_b$ are $n\times n$ matrices.
\end{theorem}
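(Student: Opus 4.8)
The plan is to recast the boundary condition in the Cayley-type form of Remark \ref{remark_K} and to exploit the natural splitting of the boundary space $\mathbb{C}^{m} = \mathbb{C}^{n}\oplus\mathbb{C}^{n}$ into an ``$a$-part'' (the first $n$ coordinates) and a ``$b$-part'' (the last $n$ coordinates) induced by \eqref{Gamma 2n}. Writing $\eta_{+} := \Gamma_{[1]}y + i\Gamma_{[2]}y$ and $\eta_{-} := \Gamma_{[1]}y - i\Gamma_{[2]}y$, the condition \eqref{diss_ext} is equivalent to $K\eta_{+} = \eta_{-}$, so that the set $V$ of admissible boundary data of functions in $\operatorname{Dom}(L_{K})$ is precisely the graph of $K$. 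By condition $2)$ of Definition \ref{PGZdef} the vector $\eta_{+}$ runs through all of $\mathbb{C}^{m}$ as $y$ runs through $\operatorname{Dom}(L_{K})$, while $\eta_{-}=K\eta_{+}$ is then determined. The crucial observation is that the $a$-components $\eta_{\pm}^{a}$ depend only on the germ $\mathbf{y_{a}}$ and the $b$-components $\eta_{\pm}^{b}$ only on $\mathbf{y_{b}}$, since each entry of $\Gamma_{[1]}$, $\Gamma_{[2]}$ is a quasi-derivative evaluated at a single endpoint. Accordingly I would identify the germ data at $a$ with $(\eta_{+}^{a},\eta_{-}^{a})$ and at $b$ with $(\eta_{+}^{b},\eta_{-}^{b})$, and reformulate separatedness as the inclusion $V\subseteq V_{a}\oplus V_{b}$, where $V_{a}$ (resp. $V_{b}$) collects the $a$-data (resp. $b$-data) of those elements of $V$ whose opposite endpoint data vanishes.

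For the ``if'' direction, suppose $K$ has the block form \eqref{separable cond}. Then $K\eta_{+}=\eta_{-}$ decouples into the two independent relations $\eta_{-}^{a}=K_{a}\eta_{+}^{a}$ and $\eta_{-}^{b}=K_{b}\eta_{+}^{b}$, so membership of $y$ in $\operatorname{Dom}(L_{K})$ is the conjunction of one condition involving only $\mathbf{y_{a}}$ and one involving only $\mathbf{y_{b}}$. Given $y\in\operatorname{Dom}(L_{K})$ and $g,h\in\operatorname{Dom}(L_{\operatorname{max}})$ with $\mathbf{g_{a}}=\mathbf{y_{a}}$, $\mathbf{g_{b}}=0$, $\mathbf{h_{a}}=0$, $\mathbf{h_{b}}=\mathbf{y_{b}}$, the $a$-condition holds for $g$ because its germ at $a$ coincides with that of $y$, while its $b$-condition holds trivially because $\eta_{\pm}^{b}(g)=0$; hence $g\in\operatorname{Dom}(L_{K})$, and symmetrically $h\in\operatorname{Dom}(L_{K})$, so the conditions are separated.

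For the ``only if'' direction I would first record the localization statement that for every $y\in\operatorname{Dom}(L_{\operatorname{max}})$ there is $g\in\operatorname{Dom}(L_{\operatorname{max}})$ with $\mathbf{g_{a}}=\mathbf{y_{a}}$ and $\mathbf{g_{b}}=0$ (and symmetrically an $h$): keep $g=y$ on $[a,c_{1}]$, set $g=0$ on $[c_{2},b]$ for some $a<c_{1}<c_{2}<b$, and interpolate on $[c_{1},c_{2}]$ by a function furnished by Lemma \ref{lm_surj} applied on that subinterval, matching the values $D^{[k]}y(c_{1})$ at $c_{1}$ and $0$ at $c_{2}$; the quasi-derivatives then stay absolutely continuous across $c_{1}$, $c_{2}$ and $D^{[m]}g\in L_{2}$. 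Now assuming separatedness and fixing $y\in\operatorname{Dom}(L_{K})$ with data $\eta_{\pm}$, the associated $g$ lies in $\operatorname{Dom}(L_{K})$ and has $\eta_{+}(g)=(\eta_{+}^{a},0)$; comparing $K\eta_{+}(g)=\eta_{-}(g)$ with the relation $K\eta_{+}=\eta_{-}$ for $y$ (whose germ at $a$ equals that of $g$) yields $K_{ab}\eta_{+}^{b}=0$ from the $a$-block and $K_{ba}\eta_{+}^{a}=0$ from the $b$-block. Since $\eta_{+}=(\eta_{+}^{a},\eta_{+}^{b})$ ranges over all of $\mathbb{C}^{n}\oplus\mathbb{C}^{n}$, the off-diagonal blocks vanish, $K_{ab}=K_{ba}=0$, which is exactly \eqref{separable cond}. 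The case of $L^{K}$ and \eqref{akk_ext} is identical after interchanging $\eta_{+}$ and $\eta_{-}$, with $\eta_{-}$ now the free parameter.

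The main obstacle I anticipate is not the linear algebra, which is transparent in the $\eta_{\pm}$ coordinates, but the careful handling of germs: one must verify that the entries of $\Gamma_{[1]}$, $\Gamma_{[2]}$ genuinely factor through $\mathbf{y_{a}}$ and $\mathbf{y_{b}}$, and one must justify the cut-and-paste construction of $g$ and $h$ so that the separated condition is non-vacuous for the chosen $y$. Both points ultimately rest on the surjectivity of the map $y\mapsto\big((D^{[k]}y(a))_{k},(D^{[k]}y(b))_{k}\big)$ guaranteed by Lemma \ref{lm_surj}.
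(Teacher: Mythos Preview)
Your proof is correct and follows essentially the same strategy as the paper's: in both, the ``if'' direction is the immediate decoupling of the boundary condition into independent $a$- and $b$-systems, and in the ``only if'' direction both plug in a function with trivial germ at one endpoint to force the off-diagonal block into $\operatorname{Ker}$, then use surjectivity (the paper via an explicit parametric form $\Gamma_{[1]}y=-i(K+I)F$, $\Gamma_{[2]}y=(K-I)F$, you via the observation that $\eta_{+}$ sweeps $\mathbb{C}^{m}$ on $\operatorname{Dom}(L_{K})$) to conclude that the off-diagonal blocks vanish. Your use of the Cayley variables $\eta_{\pm}$ streamlines the bookkeeping, and your explicit cut-and-paste construction of $g$ fills in a point the paper leaves implicit; otherwise the arguments coincide.
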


\begin{proof}
We consider the operators $L_K$ only, the case of $L^K$ can be considered in a similar way. 

We start with the following observation.
Let $y, g, h \in \operatorname{Dom}(L_{\operatorname{max}})$. 
Then one can prove by induction that $\mathbf{y_a} = \mathbf{g_a}$ if and only if 
$\mathbf{D^{[k]}y_a} = \mathbf{D^{[k]}g_a}$, $k = 0, 1, ..., m$ 
and, similarly, $\mathbf{y_b} = \mathbf{h_b}$ if and only if 
$\mathbf{D^{[k]}y_b} = \mathbf{D^{[k]}h_b}$, $k = 0, 1, ..., m$.
Therefore, the equality $\mathbf{y_a} = \mathbf{g_a}$ implies $\Gamma_{1a}y = \Gamma_{1a}g$ and $\Gamma_{2a}y = \Gamma_{2a}g$,
and the equality $\mathbf{y_b} = \mathbf{h_b}$ implies $\Gamma_{1b}y = \Gamma_{1b}h$ and $\Gamma_{2b}y = \Gamma_{2b}h$. 

If $K$ has the form \eqref{separable cond}, then the boundary
condition \eqref{diss_ext} can be rewritten as a system:
\[
\left\lbrace
\begin{aligned}
(K_a - I)\Gamma_{1a}y + i(K_a + I)\Gamma_{2a}y& = 0,
\\
 -(K_b - I)\Gamma_{1b}y + i(K_b + I)\Gamma_{2b}y & = 0,
\end{aligned}
 \right.
 \]
and these boundary conditions are obviously separated.

Inversely, let the boundary conditions \eqref{diss_ext} be separated. 
Let us represent $K \in \mathbb{C}^{2n \times 2n}$ in the block form
$$K = \left(%
\begin{array}{cc}
  K_{11} & K_{12} \\
  K_{21} & K_{22} \\
\end{array}%
\right).$$
with $n\times n$ blocks $K_{jk}$. We need to show that $K_{12} = K_{21} = 0$.
The boundary conditions \eqref{diss_ext} take the form 
\[ \left\lbrace
\begin{aligned}
(K_{11} - I)\Gamma_{1a}y + K_{12}\Gamma_{1b}y + i( K_{11} + I)\Gamma_{2a}y + iK_{12}\Gamma_{2b}y & = 0,\\
K_{21}\Gamma_{1a}y + (K_{22} - I)\Gamma_{1b}y + iK_{21}\Gamma_{2a}y + i(K_{22} + I)\Gamma_{2b}y&  = 0.
\end{aligned}
 \right. \]
By definition, any function $g$ with 
$\mathbf{g_a} = \mathbf{y_a}$ and $\mathbf{g_b} = 0$ must also satisfy this system,
which gives
\[ \left\lbrace
\begin{aligned}
&K_{11} \left[\Gamma_{1a}y + i\Gamma_{2a}y\right]= \Gamma_{1a}y - i\Gamma_{2a}y,\\
&K_{21}\left[\Gamma_{1a}y + i\Gamma_{2a}y\right] = 0.
\end{aligned}
 \right. \]
 Therefore, $\Gamma_{1a}y + i\Gamma_{2a}y \in \operatorname{Ker}(K_{21})$ for any 
$y \in \operatorname{Dom}(L_K)$.

Now rewrite  \eqref{diss_ext} in a parametric form. For any $F = (F_1, F_2) \in \mathbb{C}^{2n}$
 consider the vectors $-i \left(K + I\right)F$ and 
$\left(K - I\right)F$.
Due to Lemma \ref{lm_surj} there is a function $y_F \in \operatorname{Dom}(L_{\operatorname{max}})$ such that 
\begin{equation}\label{ext_parametric}
\left\lbrace
\begin{aligned}
-i \left(K + I\right)F &  = \Gamma _{[1]} y_F,\\
\left(K - I\right)F & = \Gamma _{[2]} y_F.
\end{aligned}\right.
\end{equation}
A simple calculation shows that $y_F$ satisfies the boundary conditions \eqref{diss_ext} and, therefore, 
${y_F \in \operatorname{Dom}(L_K)}$.  
We can rewrite \eqref{ext_parametric} as a system,
\[
\left\lbrace
\begin{aligned}
- i(K_{11} + I)F_1 - iK_{12}F_2 & = \Gamma_{1a}y_F,\\
-iK_{21}F_1 - i(K_{22} + I)F_2 & = \Gamma_{1b}y_F,\\
(K_{11} - I)F_1 + K_{12}F_2 & = \Gamma_{2a}y_F,\\
K_{21}F_1 + (K_{22} - I)F_2 & = \Gamma_{2b}y_F.
\end{aligned}
\right.
\]
The first and the third equations show that  
$\Gamma_{1a}y + i\Gamma_{2a}y = -2iF_1$ for any $F_1 \in \mathbb{C}^{n}$.
Therefore, $\operatorname{Ker}(K_{21}) = \mathbb{C}^{n}$ which means $K_{21} = 0$.
The equality  $K_{12} = 0$ is proved in the same way.
\end{proof}

\section{Generalized resolvents}

Let us recall that a \emph{generalized resolvent} of a closed symmetric operator $L$ 
in a Hilbert space $\mathcal{H}$ is an operator-valued function $\lambda\mapsto R_\lambda$ defined on $\mathbb{C} \setminus \mathbb{R}$
which can be represented as
\[
R_\lambda
f = P^+ \left( L^+ - \lambda I^+\right)^{- 1}f, \quad f \in \mathcal{H},
\]
where $L^+$ is a self-adjoint extension  $L$ which acts a certain Hilbert space $\mathcal{H}^+\supset\mathcal{H}$,
$I^+$ is the identity operator on $\mathcal{H}^+$, 
and $P^+$ is the orthogonal projection operator from $\mathcal{H}^+$ onto $\mathcal{H}$.
It is known \cite{Ahiezer-eng} that an operator-valued function 
$R_\lambda$ 
is a generalized resolvent of a symmetric operator $L$ if and only if
it can be represented as
\[
\left( R_\lambda f, g \right)_\mathcal{H} = \int_{-\infty}^{+\infty}\frac{d\left(F_\mu f, g\right)}{\mu - \lambda},
  \quad f, g \in \mathcal{H},
\] 
where $F_\mu$ is a generalized spectral function of the operator $L$, i.e.
$\mu\mapsto F_\mu$ is an operator-valued function $F_\mu$ defined on $\mathbb{R}$ and taking
values in the space of continuous linear operators in $\mathcal{H}$
with the following properties:
\begin{enumerate}
\item  For $\mu_2 > \mu_1$, the difference $F_{\mu_2} - F_{\mu_1}$
 is a bounded non-negative operator.
\item $F_{\mu +} = F_\mu$ for any real $\mu$.
\item For any $x \in \mathcal{H}$ there holds
\[ \lim\limits_{\mu \rightarrow - \infty}^{}||F_\mu x ||_\mathcal{H} = 0,
 \quad \lim\limits_{\mu \rightarrow + \infty}^{} ||{F_\mu x - x} ||_\mathcal{H} = 0.\]
\end{enumerate}

The following theorem provides a description of all generalized resolvents of
 the operator $L_{\operatorname{min}}$.

\begin{theorem}\label{thm_gener resolvent}
$1)$ Every generalized resolvent $R_\lambda$ of the operator $L_{\operatorname{min}}$
in the half-plane $\operatorname{Im}\lambda < 0$
acts by the rule $R_\lambda h = y$, where $y$ is the solution of the boundary-value problem 
\begin{gather*}
l(y) = \lambda y + h,\\
 \left( {K(\lambda) - I} \right)\Gamma _{[1]} f + i\left( {K(\lambda) + I} \right)\Gamma _{[2]} f = 0.
\end{gather*}
Here $h(x) \in L_2([a,b], \mathbb{C})$ and
$K(\lambda)$ is an $m\times m$ matrix-valued function which is holomorph
in the lower half-plane and satisfy $||K(\lambda)|| \leq 1$.

$2)$ In the half-plane $\operatorname{Im}\lambda > 0$, every generalized resolvent of 
$L_{\operatorname{min}}$ acts by $R_\lambda h = y$, 
where $y$ is the solution of the boundary-value problem 
\begin{gather*}
l(y) = \lambda y + h,\\
 \left( {K(\lambda) - I} \right)\Gamma _{[1]} f - i\left( {K(\lambda) + I} \right)\Gamma _{[2]} f = 0.
\end{gather*}
Here $h(x) \in L_2([a,b], \mathbb{C})$ and 
$K(\lambda)$ and
$K(\lambda)$ is an $m\times m$ matrix-valued function which is holomorph
in the lower half-plane and satisfy $||K(\lambda)|| \leq 1$.

The parametrization of the generalized resolvents by the matrix-valued functions $K$ is bijective.
\end{theorem}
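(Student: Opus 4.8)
The plan is to reduce the statement to the abstract description of generalized resolvents in the boundary-triplet framework and then to feed in the explicit triplet constructed in Lemma \ref{PGZth}. By Theorem \ref{thm_L_adjoint} the operator $L_{\operatorname{min}}$ is closed, densely defined and symmetric with finite equal deficiency indices $(m,m)$, and by Lemma \ref{PGZth} the triplet $(\mathbb{C}^m,\Gamma_{[1]},\Gamma_{[2]})$ is a boundary triplet for it. For such an operator there is a classical parametrization of all generalized resolvents (see \cite{Gorb-book-eng}, with the underlying notion of generalized spectral function taken from \cite{Ahiezer-eng}): in the half-plane $\operatorname{Im}\lambda<0$ every generalized resolvent is obtained by solving $l(y)=\lambda y+h$ under the boundary condition $(K(\lambda)-I)\Gamma_{[1]}y+i(K(\lambda)+I)\Gamma_{[2]}y=0$ with $K(\cdot)$ holomorphic and contractive on the lower half-plane, the correspondence being bijective; in $\operatorname{Im}\lambda>0$ the accumulative condition with the opposite sign is used. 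Granting this abstract result, parts $1)$ and $2)$ follow at once, and my task is essentially to match conventions.

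First I would verify that the boundary value problem in the statement is well posed for each fixed $\lambda$ in the relevant half-plane, so that $R_\lambda$ is a genuine bounded operator defined on all of $L_2([a,b],\mathbb{C})$. For $\operatorname{Im}\lambda<0$ and a contraction $K(\lambda)$, Theorem \ref{thm_ext} identifies the restriction $L_{K(\lambda)}$ of $L_{\operatorname{max}}$ cut out by this condition as a maximal dissipative extension of $L_{\operatorname{min}}$. Since the spectrum of a maximal dissipative operator lies in the closed upper half-plane, one has $\lambda\in\rho(L_{K(\lambda)})$, hence the problem $l(y)=\lambda y+h$, $y\in\operatorname{Dom}(L_{K(\lambda)})$, has the unique solution $y=(L_{K(\lambda)}-\lambda)^{-1}h$. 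Thus $R_\lambda h=y$ is meaningful and, for each fixed $\lambda$, coincides with the resolvent of the $\lambda$-dependent maximal dissipative extension $L_{K(\lambda)}$; the holomorphy and contractivity of $K$ are exactly what glue these pointwise resolvents into a single generalized resolvent. The case $\operatorname{Im}\lambda>0$ is symmetric, with $L^{K(\lambda)}$ maximal accumulative.

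For the converse direction and bijectivity I would invoke the dilation theory behind the cited abstract theorem. Every generalized resolvent is a compression $R_\lambda=P^+(\widetilde L-\lambda)^{-1}\big|_{\mathcal H}$ of the resolvent of a self-adjoint extension $\widetilde L$ acting in a larger space $\mathcal H^+\supset\mathcal H$; compressing onto $\mathcal H$ yields, for each $\lambda$ with $\operatorname{Im}\lambda<0$, a maximal dissipative extension of $L_{\operatorname{min}}$ inside $\mathcal H$, which by Theorem \ref{thm_ext} must equal $L_{K(\lambda)}$ for a uniquely determined contraction $K(\lambda)$; holomorphic dependence of the resolvent of $\widetilde L$ on $\lambda$ transfers to $K$. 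The passage between the two half-planes is handled via the symmetry $\lambda\mapsto\overline\lambda$ together with the dissipative/accumulative interplay already recorded in Remark \ref{remark_K}, which also secures the uniqueness and hence the bijectivity of the parametrization.

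The genuinely hard input is the abstract parametrization theorem itself, namely the Naimark--Straus dilation and the proof that the holomorphic contractions $K(\lambda)$ exhaust all generalized resolvents bijectively; within the present paper this is a black box imported from \cite{Gorb-book-eng}. Consequently the remaining work is largely bookkeeping: aligning the sign conventions of \eqref{diss_ext}--\eqref{akk_ext} and of the concrete maps \eqref{Gamma 2n}--\eqref{Gamma 2n+1} with those of the abstract statement, and confirming well-posedness of the boundary value problem through the maximal dissipativity supplied by Theorem \ref{thm_ext}. I expect the cleanest statement of the holomorphy and of the bijectivity of the correspondence between $\{K\}$ and $\{R_\lambda\}$ to be the most delicate points to write out.
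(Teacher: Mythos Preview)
Your proposal is correct and follows essentially the same route as the paper: invoke the explicit boundary triplet of Lemma \ref{PGZth} and then apply an abstract parametrization of generalized resolvents in the boundary-triplet framework. The paper's own proof is even terser---it simply cites Theorem~1 and Remark~1 of \cite{Bruk-76} (rather than \cite{Gorb-book-eng}) for the abstract result and specifies $H=\mathbb{C}^m$, $\gamma y=\{\Gamma_{[1]}y,\Gamma_{[2]}y\}$; your additional verification of well-posedness via Theorem~\ref{thm_ext} and your sketch of the converse through dilation are helpful elaborations but not required beyond what the cited abstract theorem already contains.
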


\begin{proof}
The Theorem is just an application of Lemma \ref{PGZth} and Theorem 1 and Remark 1 in \cite{Bruk-76}
which prove a description of generalized resolvents in terms of boundary triplets. 
Namely, one requires to take as an auxiliary Hilbert space $\mathbb{C}^m$ and as the operator 
$\gamma y := \{\Gamma_{[1]}y, \Gamma_{[2]}y\}$.

\end{proof}

\end{document}